\documentclass[10pt]{article}
\oddsidemargin 0in \textwidth 6.5in \textheight 8.5in \topmargin
-0.5in

\usepackage[utf8]{inputenc}
\usepackage{amsmath,amsthm,amsfonts,amssymb}
\usepackage{mathtools}
\usepackage{graphicx}
\usepackage{subcaption}
\usepackage{algorithm}
\usepackage{algpseudocode}
\usepackage{color}
\usepackage{multirow}

\newtheorem{theorem}{Theorem}
\newtheorem{lemma}{Lemma}

\graphicspath{ {./figs/}}


\begin{document}

\title{Multiscale Graph Reduction for Heterogeneous \\ and Anisotropic Discrete Diffusion Processes}

\author{
Maria Vasilyeva 
\thanks{Department of Mathematics \& Statistics, Texas A\& M University - Corpus Christi, Corpus Christi, TX, USA.  
Email: {\tt maria.vasilyeva@tamucc.edu}.} 
\and 
James Brannick 
\thanks{Department of Mathematics, The Penn State University, University Park, PA, USA.  
Email: {\tt jjb23@psu.edu}.} 
\and 
Ben S. Southworth  
\thanks{Theoretical Division, Los Alamos National Laboratory, NM, USA. 
Email: {\tt southworth@lanl.gov}.}
}

\maketitle

\begin{abstract}
We present multiscale graph-based reduction algorithms for upscaling heterogeneous and anisotropic diffusion problems. The proposed coarsening approaches begin by constructing a partitioning of the computational domain into a set of balanced local subdomains, resulting in a standard type of domain decomposition. Given this initial decomposition, general coarsening techniques based on spectral clustering are applied within each subgraph in order to accurately identify the key microscopic features of a given system. The spectral clustering algorithm is based on local generalized eigen-decompositions applied to the signed graph Laplacian. The resulting coarse-fine splittings are combined with two variants of energy-minimizing strategies for constructing coarse bases for diffusion problems. The first is an unconstrained minimization formulation in which local harmonic extensions are applied column-wise to construct multi-vector preserving interpolation in each region, whereas the second approach is a variant of the constrained energy minimization formulations derived in the context of non-local multi-continua upscaling techniques. We apply the resulting upscaling algorithms to a variety of tests coming from the graph Laplacian, including diffusion in the perforated domain, channelized media, highly anisotropic settings, and discrete pore network models to demonstrate the potential and robustness of the proposed coarsening approaches. We show numerically and theoretically that the proposed approaches lead to accurate coarse-scale models. 
\end{abstract}


\section{Introduction}

 Diffusion processes in heterogeneous and anisotropic media arise in many applications, including subsurface flow, composite materials, and image and data analysis. Solving the resulting discrete systems becomes particularly challenging when the diffusion coefficient  varies strongly or exhibits anisotropy, often spanning several orders of magnitude across the domain. This variability increases the complexity of the resulting linear systems, making them harder to solve efficiently. To reduce the system size and computational cost, various upscaling and multiscale methods have been developed, which yield smaller, coarse-scale systems. Among these, domain decomposition, multiscale, and multigrid methods are widely used, often demonstrating high efficiency. Numerous strategies for constructing a local basis (interpolation) have been developed for heterogeneous problems, mostly in the context of standard (uniform) coarsening.  To compensate for this simple coarsening, one generally uses energy minimization techniques or spectral methods since these approaches generate local harmonic bases that can to some degree account for the use of coarse variables that do not align with the heterogeneity or anisotropy for the given problem. 
From an application standpoint, graph-based physical modeling has traditionally relied on homogenization and upscaling techniques \cite{iliev2010fast, reichold2009vascular}. Recent advances improve local accuracy through multiscale methods, including mortar coupling of network and continuum models \cite{balhoff2008mortar} and the heterogeneous multiscale method \cite{chu2012multiscale}. In \cite{barker2017spectral}, coarsening for a mixed form of the Laplacian has shown promise in applications to continuum-scale reservoir simulations. Localized orthogonal decomposition has also been explored for fiber-based materials \cite{kettil2020numerical, gortz2022iterative}. 
However, most multiscale methods still assign a single macroscopic variable per local subdomain and rely on structured coarse grids, which limits their effectiveness in complex networks.

This paper develops Multiscale Graph Reduction (MsGR) techniques for coarsening such diffusion processes on heterogeneous and anisotropic structures. To capture the complex discrete structure of the materials, the method introduces local communities with distinct local behavior that are identified using spectral clustering techniques applied within subdomains.  The initial subdomains are constructed as in domain decomposition and spectral AMG methods using, for example, METIS ~\cite{karypis1997metis}. In the proposed scheme, the clustering is derived in each subdomain using a local generalized eigen-decomposition involving the signed graph Laplacian and its diagonal. We emphasize that using the signed graph Laplacian is a purely algebraic way to construct local {definite} matrices on subdomains or aggregates. 
Unlike traditional local spectral models that make use of multiple abstract basis functions on each subdomain, our approach assigns explicit macroscopic variables to dominant modes identified via the local spectral problems. 
We consider two approaches for constructing multiscale basis functions and the corresponding prolongation operators. The first is the CF-approach, which selects centroids of the clusters within subdomains as coarse nodes and builds the interpolation operator using node splitting and a label propagation scheme (i.e., ideal interpolation). 
The second approach is the MC-algorithm, where local communities (clusters) are treated as coarse variables, and basis functions are obtained by solving local constrained energy minimization problems that capture the average behavior within each local community.  
The organization of this paper is as follows.  In Section 2, we introduce the weighted graph Laplacian and present different discrete formulations for the anisotropic and diffusive processes. In Section 3, we present the construction of the coarse-scale model, which is based on domain decomposition and local spectral clustering. The numerical results are presented in Section 4 for multiple challenging problems, including (i) graph representations arising from finite element discretizations of heterogeneous perforated media; (ii) highly anisotropic heat flow, and (iii) high-contrast pore network model. The paper ends with a conclusion. 

\section{Problem formulation}
The aim of this paper is to design multiscale graph reduction  (MsGR) algorithms for coarsening and upscaling. The methods are based on constructing local weighted graph Laplacians, and
using these graph-based representations of the problem to implement aggregation, partitioning, and spectral methods that can be used to coarsen a variety of discrete models.  We consider coarsening two forms of heterogneous and anisotropic diffusion problems:
(i) a classical finite element formulation, derived from a continuous diffusion model (elliptic equation), and (ii) a discrete graph model based on the pore network framework~\cite{blunt2013pore}.

\subsection{An elliptic model problem.}
The traditional formulation of the diffusion model is given by the elliptic problem
\begin{equation} 
\label{eq:ell}
- \nabla \cdot (K(x) \nabla u) = q \quad \text{in } \Omega,
\end{equation}
with Dirichlet boundary conditions
\begin{equation}
u = g \quad \text{on } \partial \Omega,
\end{equation}
where $q$ is the given source term and $K$ is a symmetric, positive-definite tensor field that may vary spatially and exhibit strong anisotropy or heterogeneity, and the computational domain is denoted by $\Omega$ (for example, see Figure \ref{fig:perf}).

\begin{figure}[h!]
\centering
\includegraphics[width=0.25\linewidth]{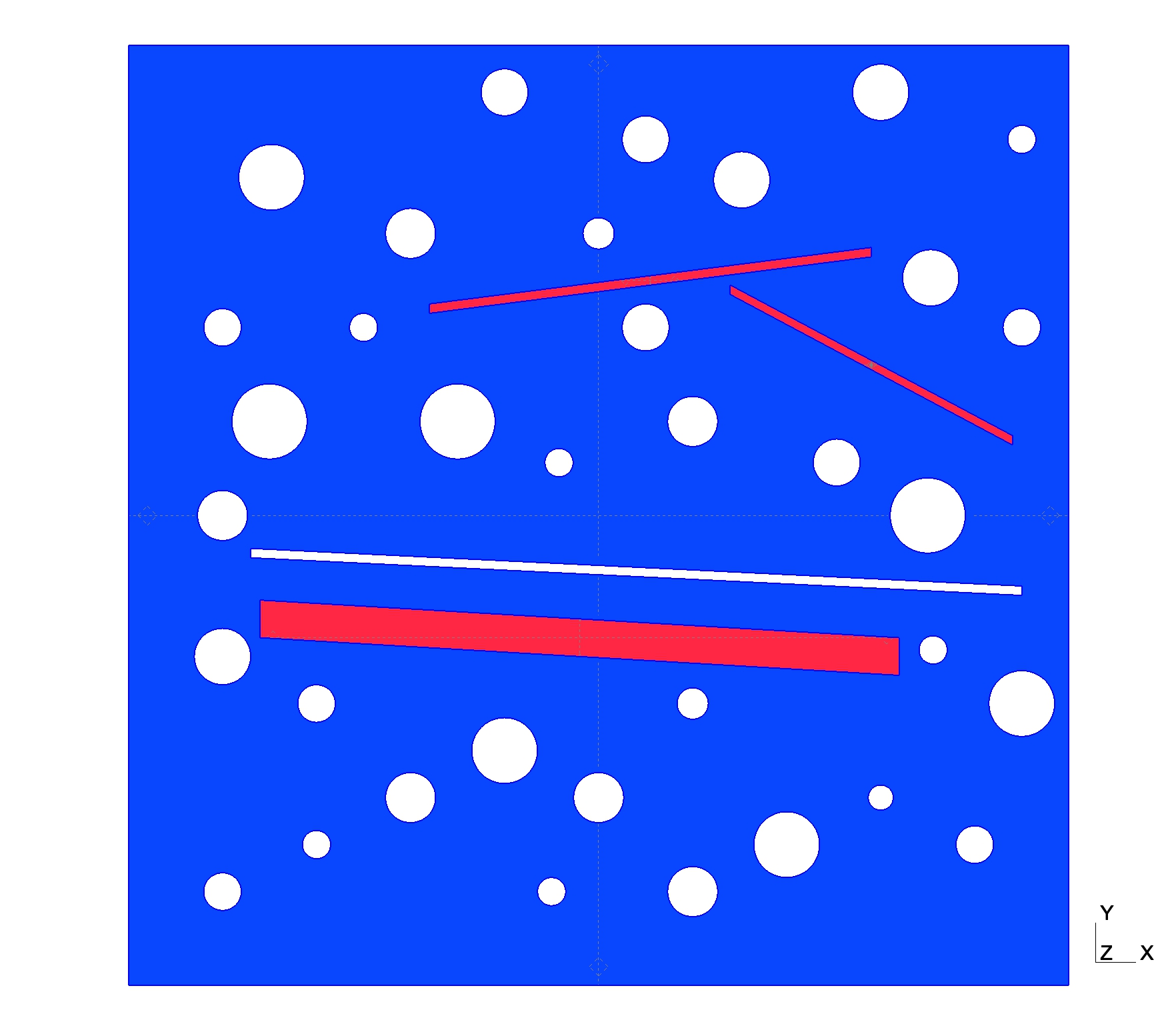}
\includegraphics[width=0.25\linewidth]{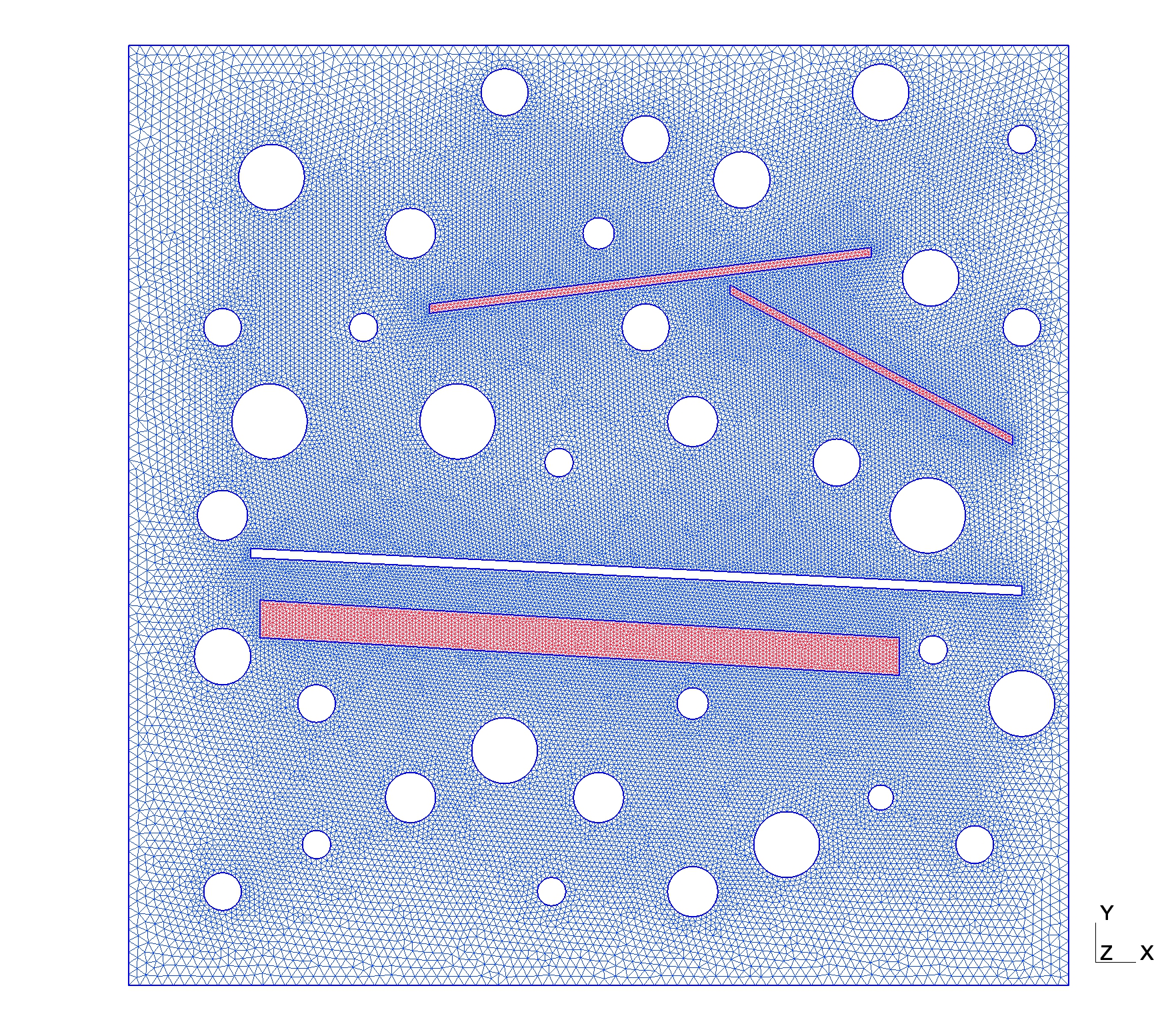}
\caption{Example heterogeneous perforated domain $\Omega$ (right) and mesh $\mathcal{T}_h$ (left).}
\label{fig:perf}
\end{figure}

To approximate this problem numerically, we use a standard conforming finite-dimensional subspace $V_h \subset H_0^1(\Omega)$, consisting of piecewise linear functions defined on a mesh $\mathcal{T}_h$. 
The finite element approximation seeks $u_h \in V_h$ such that
\[
\int_{\Omega} K \nabla u_h \cdot \nabla v_h \, dx = \int_{\Omega} q v_h \, dx, \quad \forall v_h \in V_h.
\]

Let \( \{ \phi_i \}_{i=1}^n \) denote the linear basis functions of $V_h$. Then, the approximate solution can be expressed as
\[
u_h(x) = \sum_{j=1}^n u_j \phi_j(x),
\]
and the variational problem reduces to a linear system
\[
A u = f,
\]
where $A \in \mathbb{R}^{n \times n}$ is the stiffness matrix, $u \in \mathbb{R}^n$ is the vector of unknown nodal values, and $f \in \mathbb{R}^n$ is the load vector. The entries of $A$ and $f$ are given by
\[
A = \{a_{ij}\}, \quad a_{ij} = \int_{\Omega} K \ \nabla \phi_j \cdot \nabla \phi_i \ dx, \quad
f = \{f_{i}\}, \quad  f_i = \int_{\Omega} q \ \phi_i \ dx
\]
where $\{ \phi_i \}$ are the basis functions of $V_h$. 

For the special case of free flux boundary conditions, first order elements, and isotropic $K(x)=1$, we can interpret the finite element stiffness matrix exactly as a weighted graph Laplacian. More generally, we associate a graph and corresponding weight matrix to the off-diagonal entries in the stiffness matrix, with the graph structure induced by the connectivity of the finite element basis functions and vertices for $\mathbb{P}_1$ elements. Then $G = (\mathcal{V}, \mathcal{E})$ is a signed undirected graph, on which we define a weighted graph Laplacian $L\in\mathbb{R}^{n\times n}$, where 
\begin{equation}\label{eq:laplacian}
L_{ij} = 
\begin{cases}
\sum_{k \in \mathcal{N}(i)} |a_{ik}|, & \text{if } i = j, \\
-a_{ij}, & \text{if } (i,j) \in \mathcal{E}.
\end{cases}
\end{equation}
The edge weights reflect the anisotropic and heterogeneous properties of the tensor $K(x)$.  
A key part of utilizing local spectral problems is imposing some sort of natural boundary condition, while maintaining definiteness of the local matrices. If one utilizes element information, this can be assembled directly as in, e.g., the Generalized Multiscale Finite Element Method (GMsFEM) \cite{efendiev2013generalized, vasilyeva2025generalized} or element-based AMG \cite{brezina2001algebraic}. Algebraic approaches are more complex and can be posed a finding a symmetric positive semi-definite (SPSD) splitting \cite{al2019class}. Here, we recognize the signed graph Laplacian as a simple method to maintain proper off-diagonal structure and ensure definiteness. Although local local signed graph Laplacians sacrifice the splitting property of local matrices to the global matrix as in \cite{al2019class}, in the context of identifying coarse variables we find it to be a fast and robust fully algebraic approach. 

\subsection{Discrete pore network model}

The pore network model (PNM) that we consider is a computational framework for simulating flow and transport processes in porous media by representing the complex geometry of a porous matrix as a graph/network structure \cite{blunt2013pore, valvatne2004predictive}. 
In PNM, the pore space is represented as a network of interconnected nodes, where mass conservation equations are defined over the network to simulate multiphase flow and transport processes. The geometry and connectivity of the network are typically extracted from high-resolution imaging (e.g., micro-CT), and flow between pores relates local conductance to pore geometry and fluid properties. 
In Figure  \ref{fig:porenet}, we generate multiple images of different scales, combine them, and apply a network generation algorithm from the openpnm library to extract a multiscale pore network \cite{gostick2017versatile}. 

\begin{figure}[h!]
\centering
\includegraphics[width=0.32\textwidth]{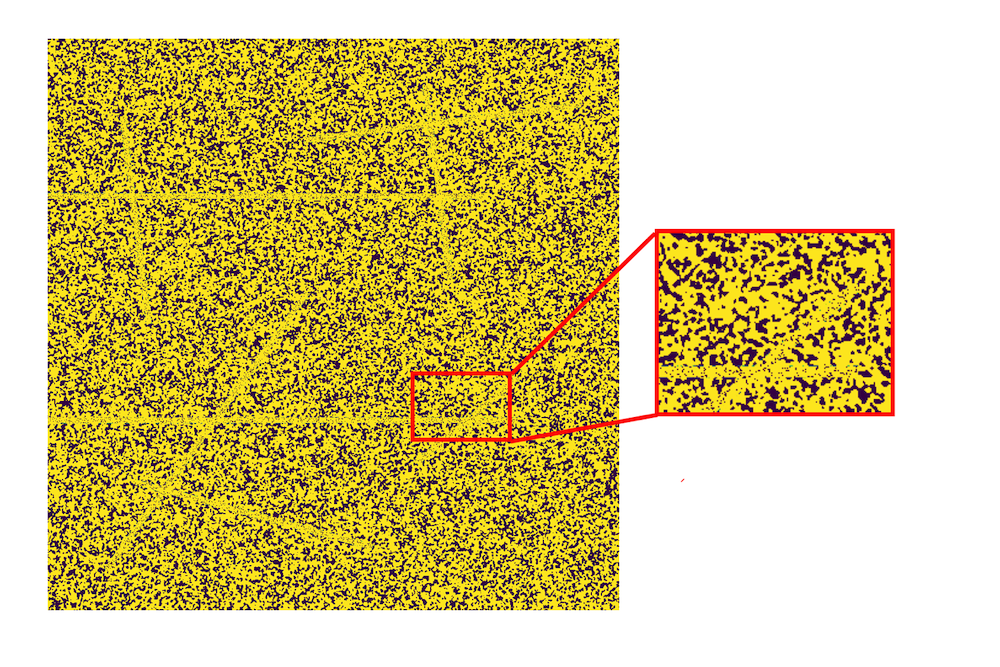}
\includegraphics[width=0.32\textwidth]{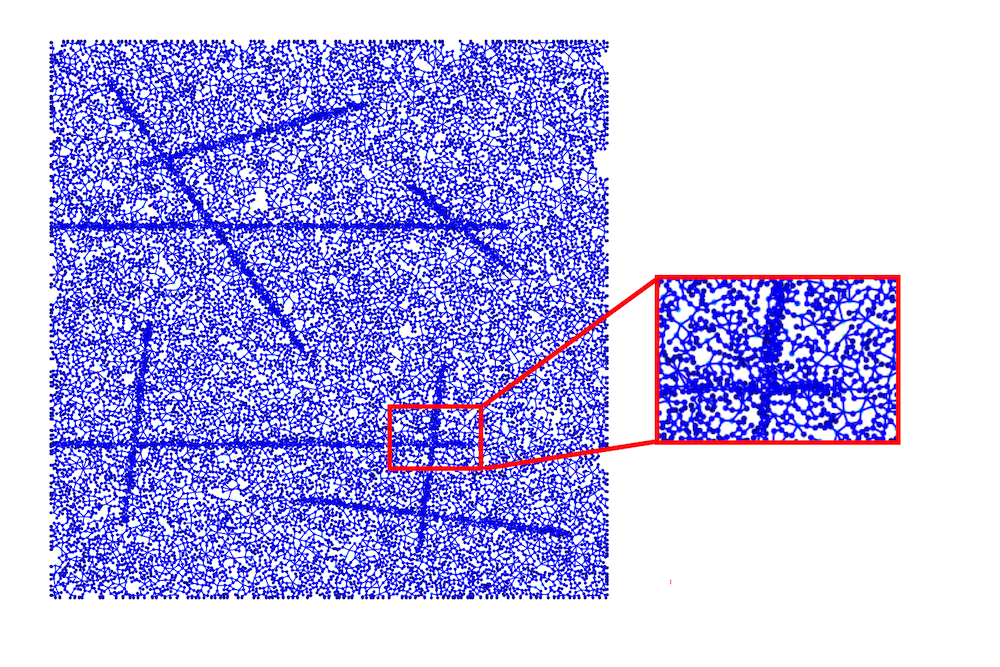}
\caption{Pore-scale structure with two scales and corresponding high-contrast network}
\label{fig:porenet}
\end{figure}

The pore network model can be represented as a weighted undirected graph $G = (\mathcal{V}, \mathcal{E})$, where each node $i \in \mathcal{V}$ corresponds to a pore body at position $x_i$ and each edge $(i,j) \in \mathcal{E}$ represents a throat connecting the two pores with weight $w_{ij} \geq 0$.
Therefore, the system of flow equations (mass balance) can be written using the weighted graph Laplacian matrix $L \in \mathbb{R}^{n \times n}$ defined as in \eqref{eq:laplacian}, where $n = |\mathcal{V}|$ is the number of pores.  

Here, each throat (edge) is assigned a hydraulic conductance $w_{ij}\geq 0$, which characterizes its ability to transmit fluid between adjacent pores $i$ and $j$. 
For a Newtonian fluid, assuming laminar flow and a circular cross-section, the conductance is typically given by the Hagen–Poiseuille law \cite{blunt2013pore, gostick2016openpnm, kettil2020numerical}
\[
w_{ij} = \frac{\pi r_{ij}^4}{8 \mu l_{ij}},
\]
where $r_{ij}$ is the radius of the throat, $l_{ij}$ is the length of the throat, and $\mu$ is the dynamic viscosity of the fluid.  We note that in this setting the weight matrix $W$ is assumed to be non-negative and, thus, the resulting approximation results in the standard (unsigned) weighted graph Laplacian system.

In the pore network model, the analogue of a Robin boundary condition is imposed in a pointwise manner at boundary nodes. 
If $\mathcal{V}_{\mathrm{bdy}} \subset \mathcal{V}$ denotes the set of boundary vertices and $\alpha_i \geq 0$ the Robin parameter, then at a boundary node $i \in \mathcal{V}_{\mathrm{bdy}}$ we have 
\[
(Au)_i =  (Lu)_i + \alpha_i u_i = b_i + \alpha_i  g = f_i,
\]
and for the interior nodes $i \in \mathcal{V}/\mathcal{V}_{\mathrm{bdy}}$
\[
(Lu)_i = b_i,
\]
where 
$u_i$ is the pressure at pore $i$ and 
$b_i$ encodes the source/sink terms. 
Equivalently, the weighted graph Laplacian is modified by adding a diagonal contribution on boundary nodes:
\begin{equation}
    A = L + B, \quad 
B_{ii} = 
\begin{cases}
\alpha_i, & i \in V_{\mathrm{bdy}}, \\
0, & i \in V \setminus V_{\mathrm{bdy}}.
\end{cases}
\label{eq:sys-f}
\end{equation}
and $f$ are incorporated boundary conditions and source term.

\section{MsGR: Multiscale Graph Reduction}

 Upscaling and multiscale methods use an aggressive coarsening strategy to construct a relatively small coarse model of the underlying processes \cite{allaire1992homogenization, hou1997multiscale}.
For example, a uniform coarse grid is often utilized in upscaling/multiscale methods, which is also used in the geometric multigrid setting. Furthermore, upscaling and multiscale methods are based on solving local problems: in representative volumes for periodic media, and in each local subdomain for the non-periodic case. In practice, one is interested in equally sized local domains for load-balanced calculations, where a balanced grid partition is usually connected with parallel computations and domain decomposition methods. 
In this paper, we propose a coarsening strategy that includes local spectral clustering to define local communities within each subdomain.  We use the clusters to enrich the multiscale space, as opposed to using the entire subdomain. As a result, the coarse degrees of freedom and resulting bases capture the complex microscale behavior, can be used with aggressive coarsening to obtain a physically meaningful coarse-scale graph representation, and typically lead to significantly lower complexities when compared to using multiple basis functions with supports on the complete subdomain.

The overall approach we propose is closely related to the multicontinuum theory that has been developed for high-contrast media in the context of the GMsFEMs \cite{efendiev2013generalized, vasilyeva2025generalized} and to the spectral AMG method originally introduced in~\cite{chartier2003spectral}, but with a refined coarsening strategy and approach for constructing the supports of coarse basis functions.  Generally, the MsGR approach can be seen as a hybrid approach that combines ideas from domain decomposition and AMG methods.  In terms of domain decomposition, MsGR refines the subdomains by forming clusters within each subdomain, thereby partitioning and adapting the subdomains in a way that captures local heterogeneity and anisotropy. And with respect to AMG coarsening, our approach both localizes the AMG coarsening process, restricting it to subdomains that preserve the mesh features locally, and refines AMG coarsening, which typically relies on heuristics based on the size of the matrix entries to define  maximal independent set algorithms used in coarsening.  

\subsection{Graph partitioning and local domains}

Similar to domain decomposition methods, we propose using graph partitioning algorithms like METIS \cite{karypis1997metis} or SCOTCH \cite{pellegrini1996scotch} to form the initial set of subdomains. These algorithms are designed to minimize the interface size between subdomains while maintaining a balance in subdomain sizes and the mesh topology locally within the subdomains.

\begin{figure}[h!]
\centering
\includegraphics[width=0.245\linewidth]{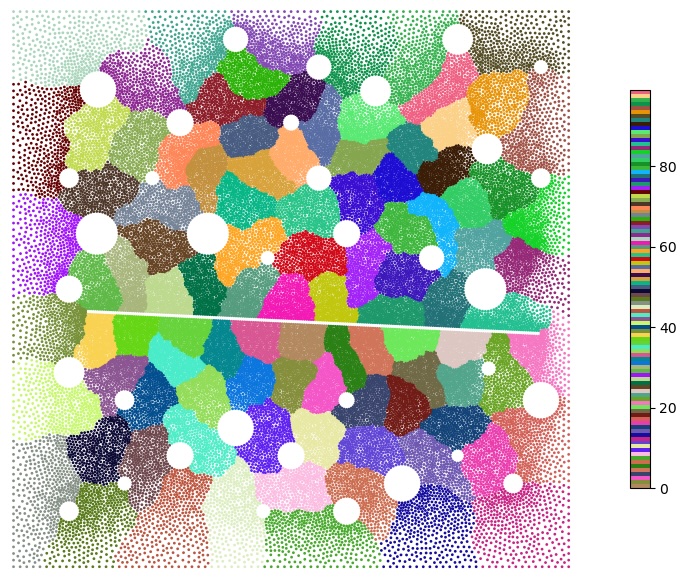}
\includegraphics[width=0.22\linewidth]{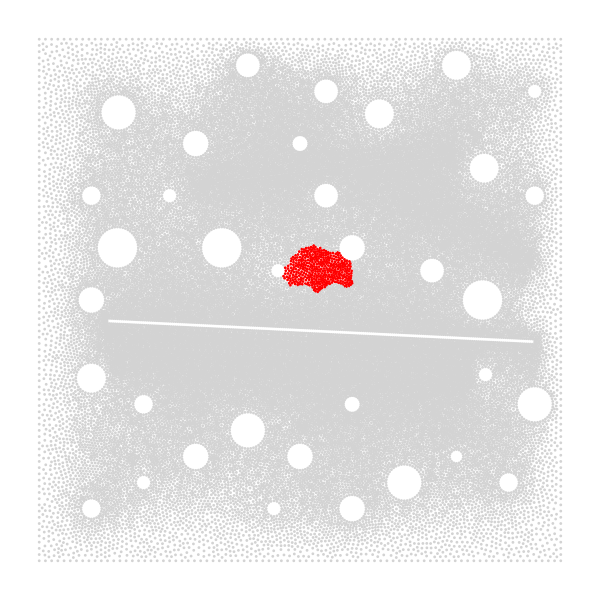}
\includegraphics[width=0.22\linewidth]{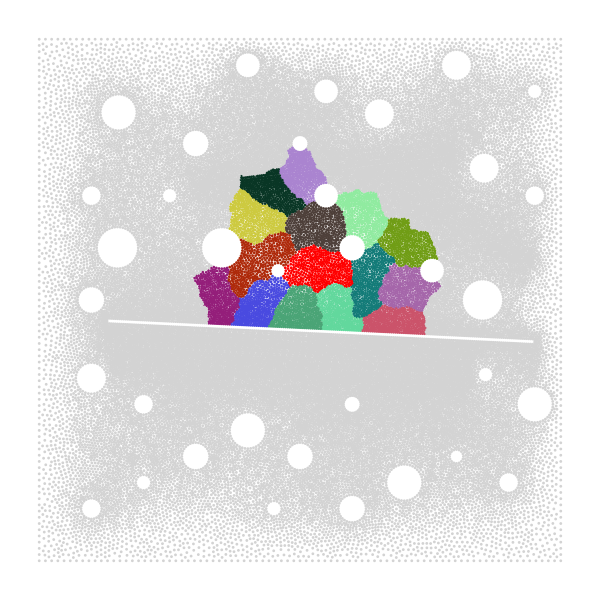}
\caption{Graph partitioning and local domains $\Omega_k$ with $N_{\Omega}=100$ (left). Illustration of oversampled subdomain $\Omega_k^+$. Target subdomain $\Omega_k$ (middle) and oversampled subdomains $\Omega_k^+$ based on distances $\delta_H$ (right). Red color is used for target subdomain $\Omega_k$}
\label{fig:dd}
\end{figure}

The proposed MsGR algorithm begins with a partition of the graph $G = (\mathcal{V}, \mathcal{E})$ into $N_{\Omega}$ disjoint subdomains
\[
\bigcup_{k=1}^{N_{\Omega}} \Omega_k = \mathcal{V}, \quad \Omega_k \cap \Omega_\ell = \emptyset \quad \text{for } k \neq \ell.
\]
Each subdomain $\Omega_k \subset \mathcal{V}$ defines a corresponding subgraph $G_k = (\Omega_k, \mathcal{E}_k)$, where $\mathcal{E}_k$ is the set of edges with both endpoints in $\Omega_k$:
\[
G_k = \left(\Omega_k, \mathcal{E}_k \right), \quad  \mathcal{E}_k = \left\{ (u, v) \in \mathcal{E} \mid u, v \in \Omega_k \right\}.
\]
In Figure \ref{fig:dd}, we plot a balanced graph partition of 100 subdomains. We used PyMetis (Python wrapper for the METIS library \cite{karypis1997metis}) for partitioning in this example. The DOFs correspond to the triangulation of the perforated domain constructed using the Gmsh library \cite{geuzaine2009gmsh}, and the mesh contains 80,746 cells, 122,012 edges, and 41,231 nodes. The partitioning to $N_{\Omega}=100$ subdomains generates subdomains of size $n^{\Omega_k} = |\Omega_k|  \approx 412$ (min=400, max=424).


To capture local interactions beyond $\Omega_k$, we define an oversampled region $\Omega_k^{+}$ by including all neighboring subdomains that contain nodes within a prescribed physical distance $\delta_H > 0$ from any node in $\Omega_k$. Letting $\mathrm{dist}(u,v)$ denote the Euclidean distance between nodes $u$ and $v$, the oversampled region is defined as
\[
\Omega_k^{+} \coloneqq \Omega_k \cup \left\{ v \in \mathcal{V} \setminus \Omega_k \;\middle|\; \exists\, u \in \Omega_k \text{ such that } \mathrm{dist}(u, v) \leq \delta_H \right\}.
\]
The corresponding oversampled subgraph is then defined as
\[
G_k^{+} \coloneqq \left(\Omega_k^{+}, \mathcal{E}_k^{+}\right), \quad \text{where } \ \mathcal{E}_k^{+} = \left\{ (u, v) \in \mathcal{E} \mid u, v \in \Omega_k^{+} \right\}.
\]
The oversampling procedure is a key step in defining local multiscale basis functions by localizing calculations and preserving the sparsity of the prolongation operator. 
In Figure \ref{fig:dd}, we plot subdomain $\Omega_k$ (red color) and the corresponding oversampled subdomains $\Omega_k^+$ for $\delta_H = 0.1$. 

\subsection{Local spectral clustering and multiple coarse-scale variables}

Our approach for identifying multiple macroscopic variables in each local subdomain is based on local spectral clustering applied in each of the subdomains, which further partitions the DOFs into communities.  
For the local network $G_k$, we consider the localized signed graph Laplacian  \eqref{eq:laplacian} \cite{knyazev2018spectralpartitioningsignedgraphs, kunegis2010spectral}
\begin{equation}
L^{\Omega_k} = D^{\Omega_k} - W^{\Omega_k},  \quad 
D^{\Omega_k} = \text{diag}(d_1, \ldots,d_{n^{\Omega_k}}), \quad 
d_i = \sum_{j=1}^{n^{\Omega_k}} |w_{ij}|, 
\end{equation}
where $L^{\Omega_k}$, $W^{\Omega_k}$ and  $D^{\Omega_k}$ are the matrices defined on the sub-network $G^{\Omega_i}$ and  $n^{\Omega_k}$ is the  number of nodes in $\Omega_k$.  
Here $W^{\Omega_k}$ is the restriction of the global weight matrix $W$ to the subdomain $\Omega_k$.  
Then, spectral clustering is applied in each of the subgraphs $G_k$ (correspondingly, each of the subdomains $\Omega_k$) using the local generalized eigenvalue problem:
\begin{equation}
\label{sp1}
L^{\Omega_k} {\phi}^{\Omega_k}
= \lambda^{\Omega_k} D^{\Omega_K} {\phi}^{\Omega_k}.
\end{equation}
The local clusters, or aggregates, in MsGR are defined by first choosing $M_k$ eigenvectors $\phi_r^{\Omega_k}$ corresponding to the smallest eigenvalues $\lambda^{\Omega_k}_r$ with $r = 1,\ldots,M_k$ in each $G_k$. 
We note that for the sub-network $G_k$, in the special case that the edge weights are all positive, the local matrix $L^{\Omega_k}$ is positive semidefinite such that the eigenvalues satisfy $0 = \lambda^{\Omega_k}_1 \leq \lambda^{\Omega_k}_2 \leq \ldots \leq \lambda^{\Omega_k}_{M_k}$  of $L^{\Omega_k}$, i.e., they are all real and nonnegative; additionally, the smallest eigenvalue $\lambda^{\Omega_k}_1 = 0$ and its corresponding eigenvector is the constant \cite{gallier2016spectral}.  
Moreover, solving  the generalized eigenvalue problem involving $D$ 
is equivalent to using the normalized signed Laplacian in local spectral clustering. Theoretically, the use of $D$ in the generalized eigenproblem accounts for the scaling used in defining the norms in the convergence analysis of upscaling methods for highly heterogeneous problems (see the Appendix \ref{app1} for details).

\begin{figure}[h!]
\centering
\includegraphics[width=0.19\textwidth]{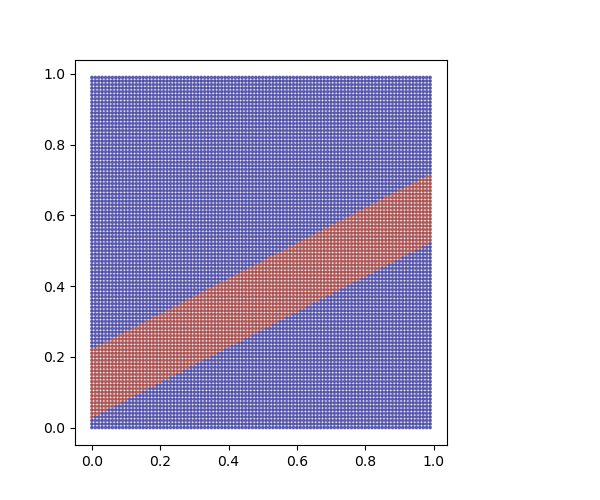}
\includegraphics[width=0.19\textwidth]{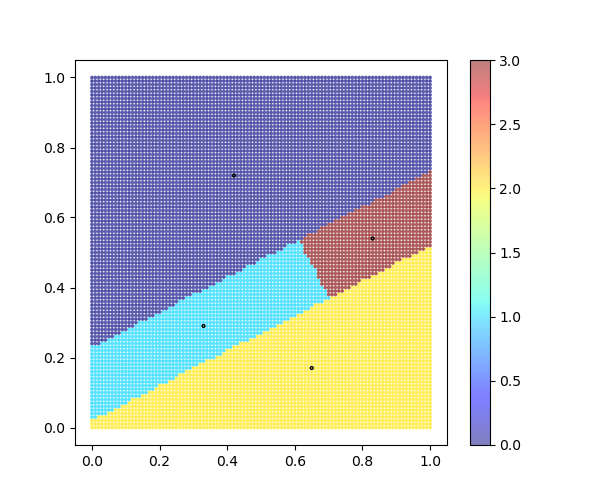}
\includegraphics[width=0.19\textwidth]{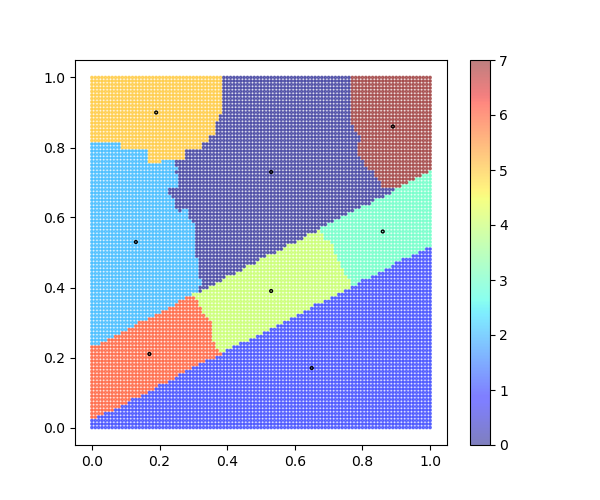}
\includegraphics[width=0.19\textwidth]{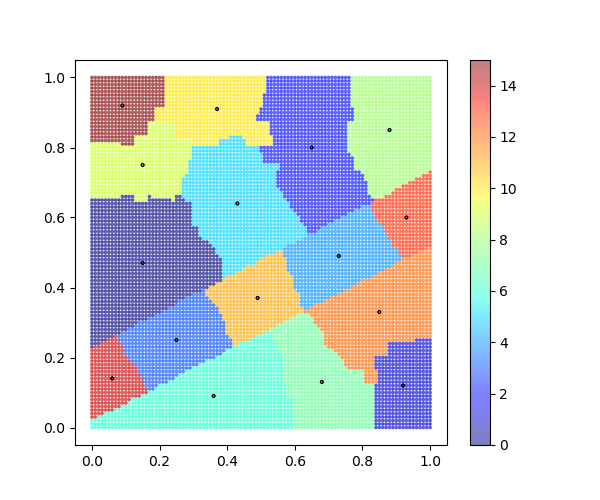}
\includegraphics[width=0.19\textwidth]{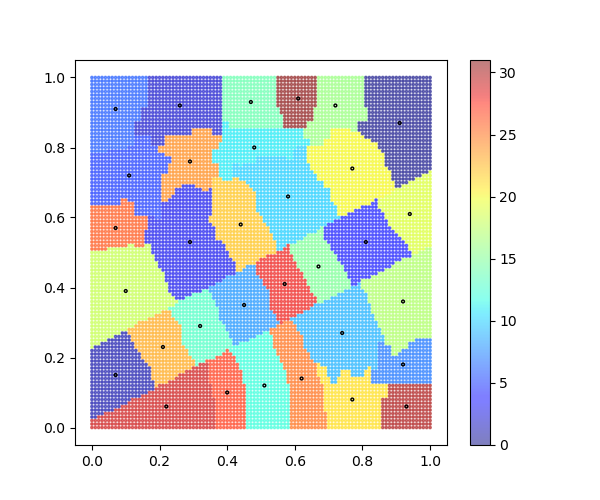}
\caption{Illustration of spectral clustering and centroids for high-contrast case, $K(x)$ (blue: $K = 1$, red: $K = 10^4$). The first plot displays the coefficient field $K(x)$. The second, third, fourth and fifth plots show the resulting partitions for 4, 8, 16 and 32 clusters.}
\label{fig:cfsplit1}
\end{figure}

\begin{figure}[h!]
\centering
\includegraphics[width=0.19\textwidth]{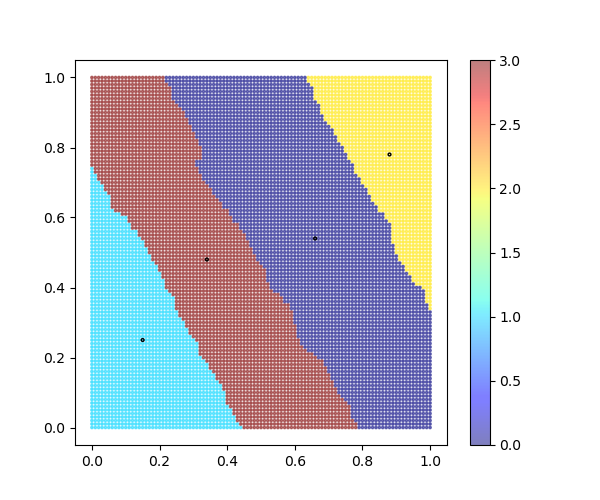}
\includegraphics[width=0.19\textwidth]{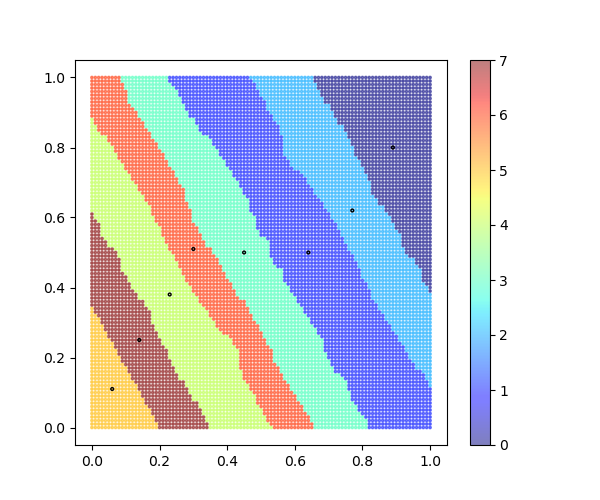}
\includegraphics[width=0.19\textwidth]{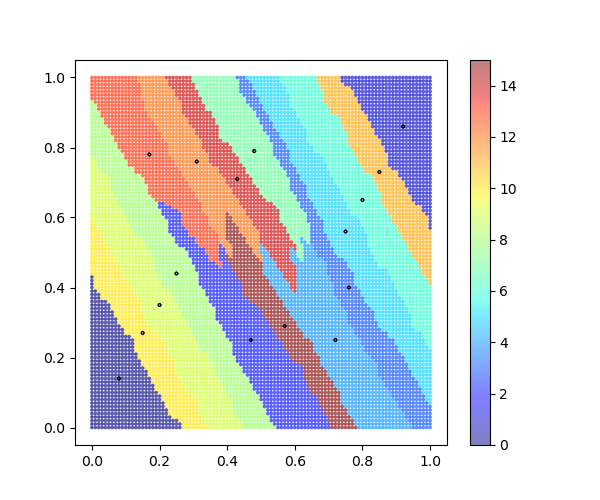}
\includegraphics[width=0.19\textwidth]{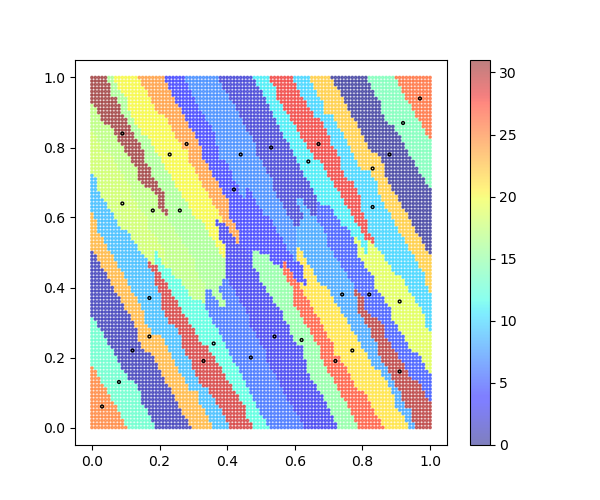}
\caption{Illustration of local clusters and their centroids for highly anisotropic case. Resulting partitions for 4, 8, 16 and 32 clusters (from left to right).}
\label{fig:cfsplit2}
\end{figure}

Next, we form a matrix of local eigenvectors $\Phi^{\Omega_k} =[\phi_1^{\Omega_k} \dots \phi_{M_k}^{\Omega_k}] \in \mathbb{R}^{n^{\Omega_k} \times M_k}$ and apply $k$-means~\cite{1056489, ng2001spectral, shi2000normalized} to the normalized rows of $\Phi^{\Omega_k}$ to partition the data into $M_k$ disjoint clusters (aggregates in AMG) 
$$
\bigcup_{r=1}^{M_k} \mathcal{A}_k^r = \Omega_k,  \quad k = 1, ..., N_{\Omega}.
$$
Here, $N_{\Omega}$ is the number of subdomains generated by METIS and $M_k$ is the number of aggregates in each of the subdomains $\Omega_k$.
Once the subdomain nodes have been clustered in the spectral embedding space, we identify representative/coarse points as the centroids $\mu_j$ for each cluster. In this work, we define the centroid of each cluster as the geometrically closest data point (in the original space) to the mean of the cluster in the spectral space. 
We choose the closest point to the centroid $x_{c}^{\mathcal{A}_k^r}$ of cluster $\mathcal{A}_k^r$ such that
\[
x_{c}^{\mathcal{A}_k^r} = \arg\min_{x_i \in \mathcal{A}_k^r} \|x_i - \mu_r\|_2.
\]
This approach ensures that the selected centroid corresponds to an actual node of $\mathcal{A}_k^r$ 
and provides a meaningful geometric interpretation of the cluster representatives. 

\begin{figure}[h!]
\centering
\includegraphics[width=0.6\textwidth]{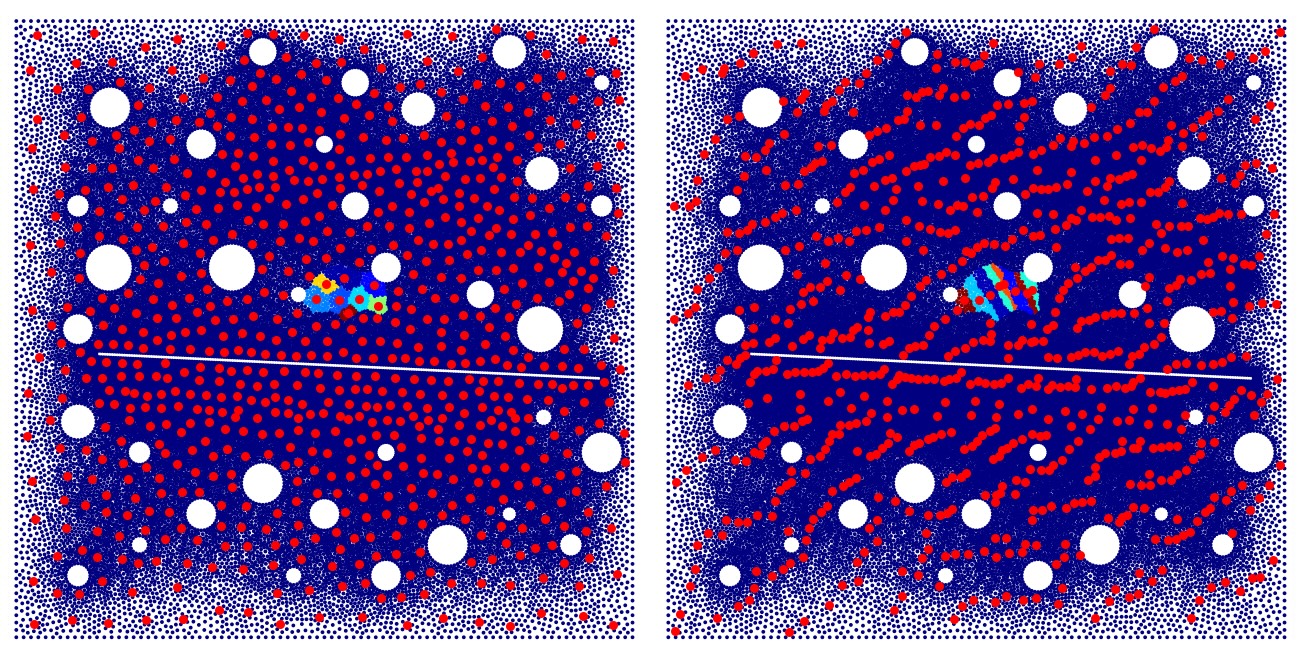}
\caption{Illustration of local clusters for heterogeneous perforated domain with centroids. 
Resulting partitions for 8 clusters in isotropic (left) and anisotropic cases (right). In the anisotropic case, note that centroids are narrowly spaced roughly orthogonal to the direction of anisotropy.}
\label{fig:cfsplit3}
\end{figure}

In Figure \ref{fig:cfsplit1}, the first plot corresponds to the heterogeneous properties of the scalar spatially varying diffusion coefficient $K(x)$. The second, third, fourth, and fifth plots illustrate the results of spectral clustering, corresponding to 4, 8, 16, and 32 clusters, respectively. The clusters are depicted with different colors and representative points (centroids). In Figure \ref{fig:cfsplit2}, we plot an illustration of spectral clustering for the fixed highly anisotropic case, $K = R^T D R$, where
\begin{equation}
\label{eq:aniso-k}
  R = \begin{bmatrix}
  \cos\theta & -\sin\theta \\
  \sin\theta & \cos\theta
  \end{bmatrix}, \quad
  D = \begin{bmatrix}
  d_1 & 0 \\
  0 & d_2
  \end{bmatrix},
\end{equation}
with $d_1 = 1$, $d_2 = 10^{-4}$ and $\theta = \pi/3$. 
Last, we consider a more general case involving a domain with heterogeneity, perforations, and potentially anisotropy. The heterogeneous domain consists of two subregions: the main domain (in blue) with permeability $K = 1$, and a highly diffusive channel (in red) with $K = 10^4$ (see the left plot in Figure \ref{fig:perf}). In addition, we illustrate the effect of anisotropy in the main domain on the formation of local clusters.
In Figure \ref{fig:cfsplit3}, we plot local clusters in subdomain $\Omega_k$, and global centroids for isotropic and anisotropic diffusion, with anisotropy as defined in \eqref{eq:aniso-k}. The plots illustrate how spectral clustering based on the given operator understands underlying connections and divides local subdomains into heterogeneity- or anisotropy-aligned clusters.
The eigenvectors of the weighted Laplacian encode structural information about the graph, and the spectral embedding reveals approximate block structures corresponding to weakly connected components. These blocks often reflect clusters with minimal interaction between them. Moreover, as seen in Figure 6, since the clusters appropriately align themselves with the heterogeneity or anisotropy, the centroids produce a coarse-fine splitting that accurately captures these features.

\subsection{Multiscale basis functions}

We consider two approaches for constructing multiscale basis functions and the corresponding prolongation operators within a Galerkin coarsening approach:
\begin{itemize}
\item \textit{CF-approach:} Coarse nodes are defined by identifying centroids  and using node splitting into disjoint coarse and fine node groups, resulting in a CF-splitting defined in terms of the clustering.  Basis functions are computed column-wise as local approximations to ideal interpolation.  
\item \textit{MC-approach:} Coarse variables are associated with local clusters (aggregates of vertices) and basis functions are computed by solving local constrained energy minimization problems, effectively capturing the average behavior within each community of variables.
\end{itemize}
Overall, we consider global constructions of these approaches for constructing multiscale basis functions, as well as their corresponding localized versions.  

\subsubsection*{CF-approach} 

In the \textit{CF-approach}, we partition the node set $\mathcal{V}$ into two disjoint subsets $\mathcal{V} = \mathcal{C} \cup \mathcal{F}$, $\mathcal{C} \cap \mathcal{F} = \emptyset$, where $\mathcal{C}$ denotes the set of coarse nodes and $\mathcal{F}$ the set of fine nodes.  
In AMG theory, such partitioning is called CF-splitting and plays a fundamental role in constructing the prolongation operator \cite{ruge1987algebraic, falgout2004generalizing}.
We start with an approach for deriving the coarse equations based on the global ideal interpolation operator. 
Given a CF-splitting, ideal interpolation reads
\begin{equation}
\label{eq:pideal}
P_{ideal} =
\begin{bmatrix}
\mathcal{W} \\
\mathcal{I}
\end{bmatrix}, \quad 
\mathcal{W} = -A_{\mathcal{F}\mathcal{F}}^{-1} A_{\mathcal{F}\mathcal{C}},
\quad 
A =
\begin{bmatrix}
A_{\mathcal{F}\mathcal{F}} & A_{\mathcal{F}\mathcal{C}} \\
A_{\mathcal{C}\mathcal{F}} & A_{\mathcal{C}\mathcal{C}}
\end{bmatrix},\quad 
\end{equation}
where $\mathcal{I}$ is the identity matrix interpolating the coarse variables and $\mathcal{W}$ represents the interpolation weights from the coarse to the fine variables. 

After coarsening, we have multiple coarse points (centroids $x_{c}^{\mathcal{A}_k^r}$) in each subdomain. Define a solution vector over the coarse space
$u_{\mathcal{C}} = (u_1^{\Omega_1}, \ldots, u_{M_1}^{\Omega_1}, \ldots, u_1^{\Omega_{N_{\Omega}}}, \ldots, u_{M_{N_{\Omega}}}^{\Omega_{N_{\Omega}}})$, 
where $u_r^{\Omega_k}$ represents $r$-th coarse variable in local domain $\Omega_k$, and $u_{\mathcal{C}} \in \mathbb{R}^{n_c}$ for $n_c = \sum_{k=1}^{N_{\Omega}} M_k$. Then let $ \psi_r^{\Omega_k} \in \mathbb{R}^n$ denote global multiscale basis functions defined as a column of the global ideal interpolation operator
\[
P_{glo} =
P_{ideal} =
\left[  
\psi_1^{\Omega_1} \ldots \psi_{M_1}^{\Omega_1} 
\ldots 
\psi_1^{\Omega_{N_{\Omega}}} \ldots \psi_{M_{N_{\Omega{}}}}^{\Omega_{N_{\Omega}}}
\right].
\]
Recall that, here, $N_{\Omega}$ is the number of local subdomains computed by the MsGR algorithm, and $M_k$ denotes the number of clusters that are computed in each of the subdomains. We can partition our multiscale basis functions across C- and F-points, $\psi_r^{\Omega_k} 
= ({\psi}_{r,f}^{\Omega_k}, {\psi}_{r,c}^{\Omega_k})$.
Note by the nature of the identity block in $P_{ideal}$, we have $\psi_{r,c}^{\Omega_k} =  \{ e_{r, j}^{\Omega_k} \}$ with $e_{r, \ell(j,b)}^{\Omega_k} = \delta_{ij, rb}$ and index $\ell(j,b)$ corresponding to the centroid  $x_{c}^{\mathcal{A}_j^b}$ of the local cluster $\mathcal{A}_j^b$, that is, $\psi_{r,c}^{\Omega_k}$ is $1$ at the target local cluster centroid $x_{c}^{\mathcal{A}_k^r}$ and $0$ elsewhere. Then, $\psi_r^{\Omega_k}  = (A_{\mathcal{F}\mathcal{F}}^{-1} A_{\mathcal{F}\mathcal{C}}{\psi}_{r,c}^{\Omega_k}, {\psi}_{r,c}^{\Omega_k})$.
From here we can approximate the fine-grid solution as interpolated from coarse multiscale basis functions via
\[
u \approx P_{ideal}u_{\mathcal{C}} = \sum_{k=1}^{N_{\Omega}} \sum_{r=1}^{M_k} 
u_r^{\Omega_k} \psi_r^{\Omega_k}.
\]
Note, there exists coarse $u_{\cal C}$ such that this interpolation yields the exact fine-grid solution by nature of linear reduction. 

We can localize the above global basis construction and perform calculations in oversampled domains, $\Omega_k^+$.
We define basis functions as follows:
\[
\tilde{\psi}_r^{\Omega_k} 
= (\tilde{\psi}_{r,f}^{\Omega_k}, \tilde{\psi}_{r,c}^{\Omega_k}) 
= (\mathcal{W}^{\Omega_k^+} \tilde{\psi}_{r,c}^{\Omega_k}, \tilde{\psi}_{r,c}^{\Omega_k}), \quad 
\mathcal{W}^{\Omega_k^+} = - \Big(A^{\Omega_k^+}_{\mathcal{F}\mathcal{F}} \Big)^{-1} A_{\mathcal{F}\mathcal{C}}^{\Omega_k^+},
\quad 
A^{\Omega_k^+} =
\begin{bmatrix}
A^{\Omega_k^+}_{\mathcal{F}\mathcal{F}} & 
A^{\Omega_k^+}_{\mathcal{F}\mathcal{C}} \\
A^{\Omega_k^+}_{\mathcal{C}\mathcal{F}} & 
A^{\Omega_k^+}_{\mathcal{C}\mathcal{C}}
\end{bmatrix}, 
\]
where $\tilde{\psi}_{r,c}^{\Omega_k}$ is the vector with zeros in all elements except continuum $r$ in target aggregate $\mathcal{A}_k$. The operator $A^{\Omega_k^+}$ is the block of the global operator $A$ corresponding to the oversampled domain of the $k$th aggregate, $\Omega_k^+$.     
The local construction of prolongation is then given by
\[
P_{loc} =
\left[  
\tilde{\psi}_1^{\Omega_1} \ldots \tilde{\psi}_{M_1}^{\Omega_1} 
\ldots 
\tilde{\psi}_1^{\Omega_{N_{\Omega}}} \ldots \tilde{\psi}_{M_{N_{\Omega}}}^{\Omega_{N_{\Omega}}}
\right],
\]
where $\tilde{\psi}_r^{\Omega_k}$ are the localized basis functions mapped to the global indexing.  This procedure is analogous to the approach used in constructing $\ell$AIR restriction and interpolation~\cite{AIR,manteuffel2019nonsymmetric,constrainedAIR}, where ideal interpolation (or restriction) is enforced exactly over local overlapping subdomains, and is also used in energy minimization techniques for balancing domain decomposition methods~\cite{https://doi.org/10.1002/nla.341}. Using MsGR coarsening with this interpolation can be seen as a refinement to both these AMG and DD approaches.

\subsubsection*{MC-approach} 

In the \textit{MC-approach}, coarse variables identified with clusters are considered as a community  of nodes that exhibit similar behavior and are represented in an averaged manner, as in unsmoothed aggregation AMG~\cite{https://doi.org/10.1002/nla.741}. 
The method is motivated by the Nonlocal Multicontinua Upscaling (NLMC) method \cite{chung2018non, vasilyeva2019nonlocal, vasilyeva2019upscaling}, where each constraint corresponds to a specific continuum. However, in NLMC, these constraints must be predefined prior to computation, which is only feasible in simple settings where the continua can be manually identified. 

To construct multiscale basis functions associated with subdomain $\Omega_k$ in the \textit{MC-approach}, we begin with a global constrained energy minimization problem. Let $\mathcal{A}_k^{r}$ denote the aggregate associated with the local clusters $r = 1, \ldots, M_k$, and assume that $\cup_{r=1}^{M_k} \mathcal{A}_k^{r} = \Omega_k$.  Hence, we have an aggregation for each subdomain.  
Then, for a fixed pair $(k, r)$, the basis function ${\psi}_r^{\Omega_k}$ is obtained by solving:
\begin{equation} \label{eq:min-prob}
\min_{\psi} \ \frac{1}{2} \psi^T A \psi \quad \text{subject to} \quad S^{\mathcal{A}_j^{b}} \psi = \delta_{kj} \delta_{rb}, \quad \forall j = 1,\ldots, N_{\Omega}, \quad b = 1,\ldots, M_j,
\end{equation}
where $S^{\mathcal{A}_j^{b}}$ represents the constraint operator associated with the aggregate $\mathcal{A}_j^{b}$. The Kronecker delta $\delta_{kj}$ and $\delta_{rb}$ ensures the physical meaning of the basis function with respect to the $r$-th constraint in $\Omega_k$. 
The operator $S^{\mathcal{A}_j^{b}}$ represents the local constraint operator associated with the aggregate $\mathcal{A}_j^{b}$. It computes the mean value of a fine-scale function over the aggregate and enforces the continuum separation conditions. The discrete form of $S^{\mathcal{A}_j^{b}}$ is given as a row vector with entries 1 or 0
\[
S^{\mathcal{A}_j^{b}}  = \{s_i\}, 
\quad
s_i = 
\begin{cases}
1/n^{\mathcal{A}_j^{b}}, & \text{if } x_i \in \mathcal{A}_j^{b}, \\ 
0, & \text{otherwise.}
\end{cases},
\]
where $n^{\mathcal{A}_j^{b}} = |\mathcal{A}_j^{b}|$. 
The constraint equation ensures that the basis function associated with the aggregate  $\mathcal{A}_k^{r}$ has an average value of one over its own aggregate $\mathcal{A}_k^r$, and zero over all other aggregates $\mathcal{A}_j^{b}$ within the global domain (graph).

To solve the constrained minimization problem, we apply the method of Lagrange multipliers, leading to the following saddle-point system:
\begin{equation}
\label{ms-cem-gl}
\begin{split}
& A {\psi}_r^{\Omega_k}
+ \sum_{j=1}^{N_{\Omega}} \sum_{b=1}^{M_j}  (S^{\mathcal{A}_j^b})^T \gamma_{b, j} = 0,\\
& S^{\mathcal{A}_j^{b}} {\psi}_r^{\Omega_k}  = \delta_{kj} \delta_{rb}, \quad \forall \mathcal{A}_j^{b},
\end{split}
\end{equation}
where  $\gamma_{b,j}$ are the Lagrange multipliers associated with the constraints imposed on the aggregate $\mathcal{A}_j^b$.


For a localized approach,  we construct a set of basis functions $\tilde{\psi}_r^{\Omega_k} \in \Omega_k^+$ by solving the local problems with zero Dirichlet boundary conditions subject to continuum separation constraints. 
We solve the following constrained cell problems formulated using  Lagrange multipliers 
\begin{equation}
\label{ms-a2}
\begin{split}
& A^{\Omega_k^+} \tilde{\psi}_r^{\Omega_k}
+ \sum_{\Omega_j \subset \Omega_k^+} \sum_{\mathcal{A}_j^{b} \subset \Omega_j}  (S^{\mathcal{A}_j^b})^T \gamma_{b, j} = 0,\\
& S^{\mathcal{A}_j^{b}} \tilde{\psi}_j^{\Omega_b}  = \delta_{kj} \delta_{rb}, \quad \forall \mathcal{A}_j^b \subset \Omega_j \subset  \Omega_k^+.
\end{split}
\end{equation}
The operator $S^{\mathcal{A}_j^{b}}$ represents the local constraint operator associated 
with the aggregate $\mathcal{A}_j^{b}$. 
The constraints give meaning to the coarse scale solution and produce a mean value of one on the coarse aggregate $\mathcal{A}_k^r$ for the current community and zero on all other aggregates within $\Omega_k^+$.

Finally, we define prolongation operators related to global and localized versions
\[
P_{glo} =
\left[  
\psi_1^{\Omega_1} \ldots \psi_{M_1}^{\Omega_1} 
\ldots 
\psi_1^{\Omega_{N_{\Omega}}} \ldots \psi_{M_{N_{\Omega}}}^{\Omega_{N_{\Omega}}}
\right], \quad 
P_{loc} =
\left[  
\tilde{\psi}_1^{\Omega_1} \ldots \tilde{\psi}_{M_1}^{\Omega_1} 
\ldots 
\tilde{\psi}_1^{\Omega_{N_{\Omega}}} \ldots \tilde{\psi}_{M_{N_{\Omega}}}^{\Omega_{N_{\Omega}}}
\right],
\]
where ${\psi}_r^{\Omega_k}$ are the global basis functions and and $\tilde{\psi}_r^{\Omega_k}$ the localized multiscale basis functions mapped to the global indexing. 

\subsection{Coarse-scale system}



To define a coarse grid approximation of steady-state system $Au = f$, we use a prolongation matrix $P \in \mathbb{R}^{n_c \times n}$ and form a coarse-grained system as follows
\begin{equation}
\label{eq:coarse}
A_c u_c = f_c, \quad A_c = R A P \in \mathbb{R}^{n_c \times n_c}, \quad f_c = R f \in \mathbb{R}^{n_c},
\end{equation}
where we set $R = P^T \in \mathbb{R}^{n \times n_c}$ for the restriction operator for symmetric problems. 
Moreover, we can reconstruct the fine-scale solution
\[
    u_{ms} \coloneqq P u_c  \in \mathbb{R}^{n}.
\]
We note that the error depends on the partitioning into subdomains $\Omega_k$  ($H$) and intra-cluster heterogeneity $C_{{ratio}}$ (contrast ratios within local cluster)
\[
\|u - u_{ms}\|_A = \mathcal{O} \left(H C_{\text{ratio}}^{1/2}\right).
\]
where $H = \max_{k,r} H_{k,r}$, $H_{k,r} = \text{diam}(\mathcal{A}_{k}^r)$, $C_{{ratio}} = \max_{k,r} C_{{ratio}}^{k,r}$, $C_{{ratio}}^{k,r} = \overline{w}^{r,k}/\underline{w}^{r,k}$ ($\underline{w}^{k,r} \leq w_{ij} \leq \overline{w}^{k,r}$ and $w_{ij}$ is the connection weight of edge $e_{ij} \in \mathcal{G}_k$). 
Here $u$ is the solution of fine-scale system \eqref{eq:sys-f}, $u_{ms} = P u_c$ is the multiscale interpolated solution of \eqref{eq:coarse}, and $\|v\|^2_{A} = v^T A v$  (see Appendix \ref{app1} for details). 

\section{Numerical results}

To demonstrate the effectiveness of the proposed local spectral clustering approach, we examine three test problems: \textit{(Test 1)} an elliptic equation discretized by finite elements on a heterogeneous perforated domain; \textit{(Test 2)} a highly anisotropic heat flow problem; and \textit{(Test 3)} a high-contrast pore network model.
We compare a multiscale solution $u_{ms}$ with reference solution in  $L_2$ ($\|u\|^2 = v^T v$) and energy ($\|v\|^2_{A} = v^T A v$) norms
\[
e_1 = \frac{\|u-u_{ms}\|}{\|u\|} \times 100 \%, \quad 
e_2 = \frac{\|u-u_{ms}\|_A}{\|u\|_A} \times 100 \%, 
\]
where the relative error is presented in percentage. As a reference solution, we use the fine-scale solution. 
Implementation is performed using the Python with the SciPy sparse library to represent and solve the resulting system of linear equations. We used PyMetis (Python wrapper for the METIS library \cite{karypis1997metis}) for graph partitioning. 

\subsection{Test 1: Elliptic problem in heterogeneous perforated domain}

We consider a heterogeneous perforated domain $\Omega = [0,1]^2$ with 35 circular perforations and one thin long perforation in the middle of the domain (see Figure \ref{fig:perf}).
We have heterogeneous coefficient $K(x)$ defined in $\Omega = \Omega_1 \cup \Omega_2$, 
where we set $K(x) = K_1$ in $\Omega_1$ (main domain) and $K(x) = K_2$ in $\Omega_2$ (channels domain) (see Figure \ref{fig:perf}, blue and red color represent $\Omega_1$ and $\Omega_2$).

\begin{figure}[h!]
\centering
\begin{minipage}[t]{0.95\textwidth} 
\centering
\includegraphics[width=1\textwidth]{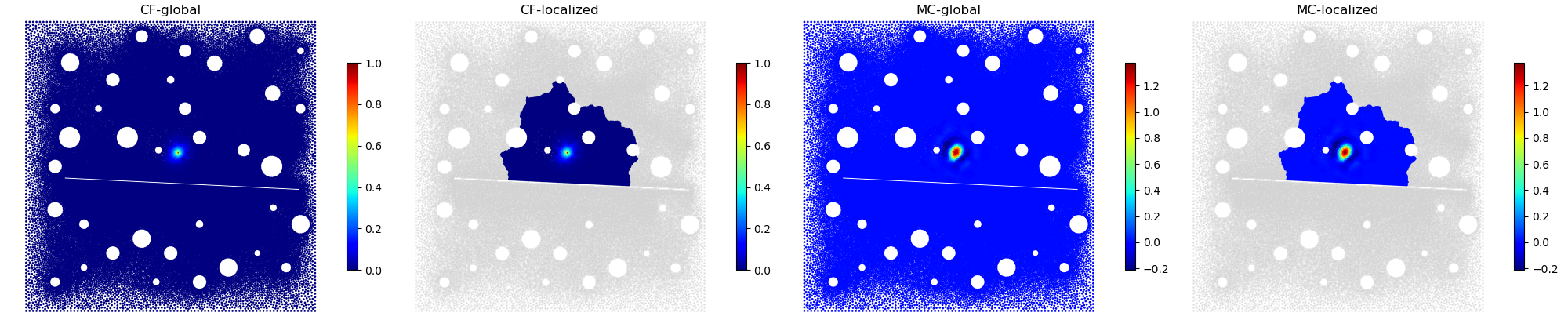}
\subcaption{Test 1a (isotropic)}
\end{minipage}
\begin{minipage}[t]{0.95\textwidth} 
\centering
\includegraphics[width=1\textwidth]{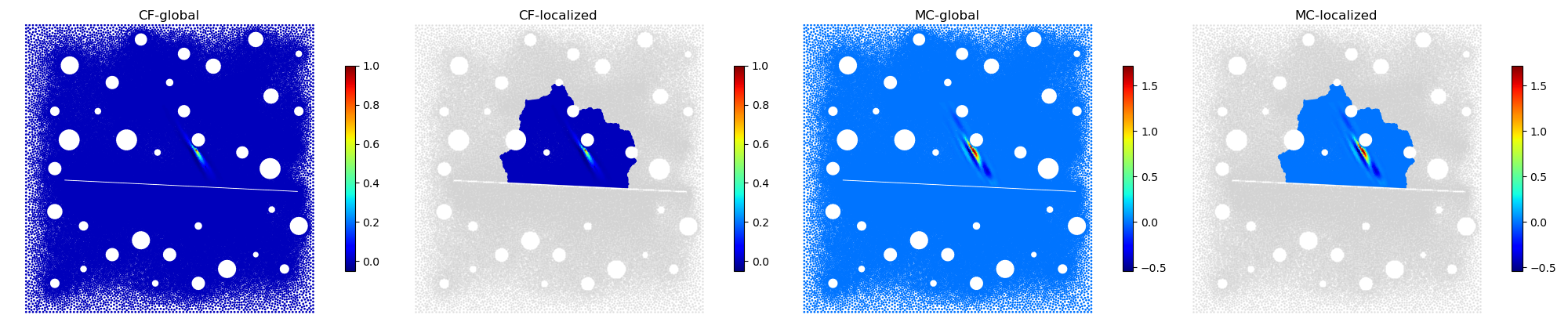}
\subcaption{Test 1b (anisotropic)}
\end{minipage}
\caption{Tests 1a and 1b. Multiscale basis functions (global and local) for CF- and MC-approaches. $N_{\Omega}=100$ and $M=8$}
\label{fig:test1-msbasis}
\end{figure}

\begin{figure}[h!]
\centering
\begin{minipage}[t]{0.45\textwidth} 
\centering
\includegraphics[width=1\textwidth]{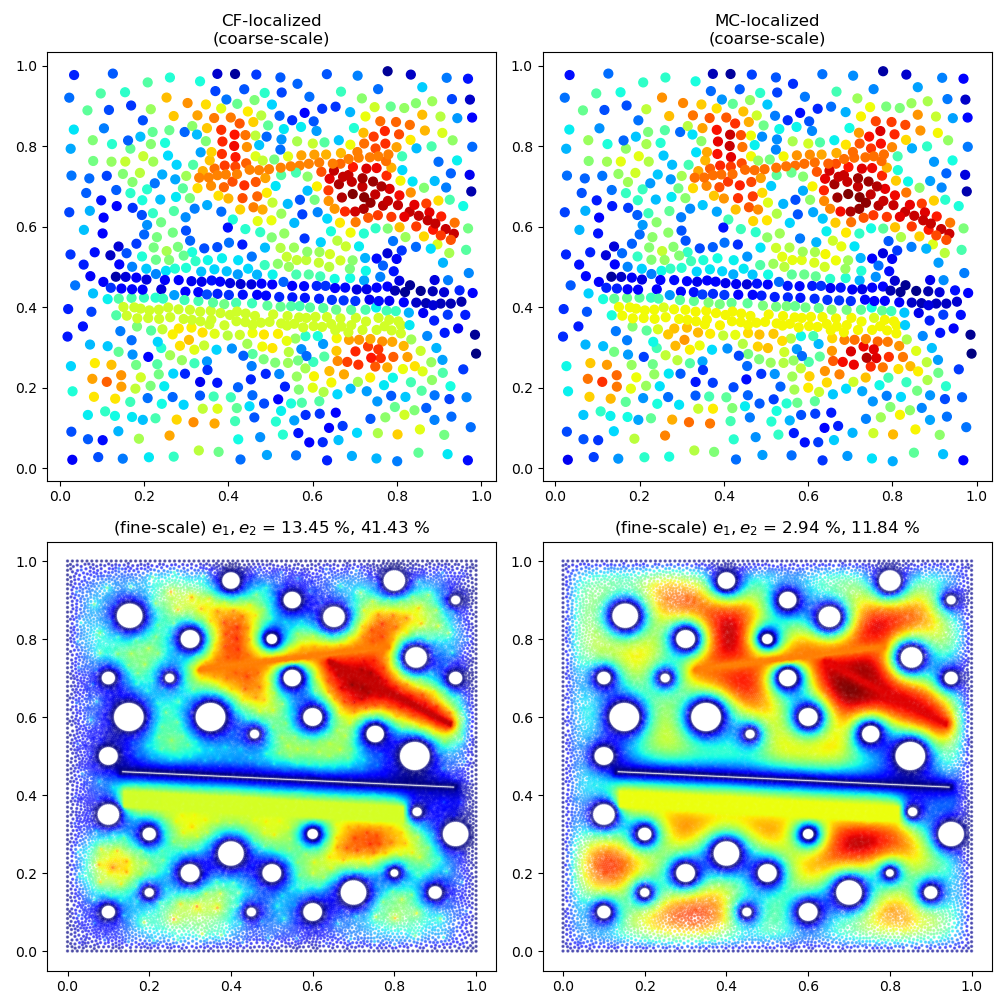}
\subcaption{Test 1a (isotropic)}
\end{minipage}
\ \ \ \
\begin{minipage}[t]{0.45\textwidth} 
\centering
\includegraphics[width=1\textwidth]{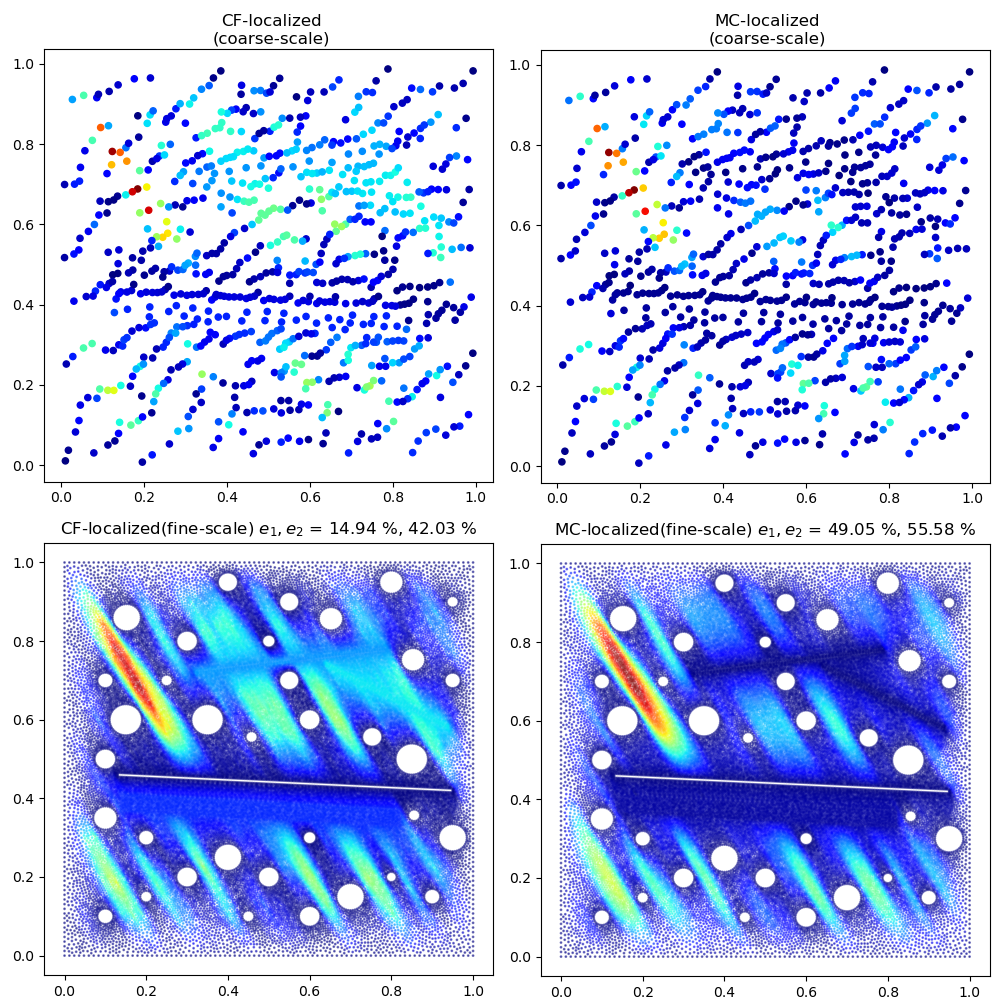}
\subcaption{Test 1b (anisotropic)}
\end{minipage}
\caption{Tests 1a and 1b. Multiscale solution: coarse-scale solution $u_c$ (first row), projected to fine-scale $u_{ms}$ (second row) and after three colored local correction iterations  (third row). 
$DOF_c = 800$ (coarse-scale, $N_{\Omega}=100$ and $M=8$) and $DOF = 41,231$ (fine-scale)}
\label{fig:test1}
\end{figure}

\begin{table}[ht!]
\centering
\begin{tabular}{|cc||cc|cc||cc|cc||}
\hline
\multirow{4}{*}{M} & 
& \multicolumn{4}{c||}{$N_{\Omega} = 25$} 
& \multicolumn{4}{c||}{$N_{\Omega} = 100$}\\ 
 & 
& \multicolumn{2}{c}{CF} & \multicolumn{2}{c||}{MC} 
& \multicolumn{2}{c}{CF} & \multicolumn{2}{c||}{MC}  \\
& 
& $e_{1}$ & $e_{2}$  & $e_{1}$ & $e_{2}$ 
& $e_{1}$ & $e_{2}$  & $e_{1}$ & $e_{2}$ \\
\hline
\multicolumn{10}{|c|}{Test 1a}\\
\hline
1 & glo & 73.36 & 86.66 & 9.76 & 21.58 & 44.66 & 71.17 & 3.55 & 11.95 \\
 & loc & 93.12 & 96.01 & 51.61 & 54.61 & 84.51 & 89.31 & 62.45 & 57.85 \\
2 & glo & 49.82 & 75.24 & 6.55 & 17.30 & 32.89 & 62.13 & 2.32 & 9.29 \\
 & loc & 90.65 & 93.88 & 58.13 & 57.00 & 54.85 & 73.84 & 53.36 & 53.26 \\
4 & glo & 42.84 & 70.00 & 3.81 & 12.62 & 21.94 & 51.84 & 1.41 & 6.33 \\
 & loc & 62.99 & 79.14 & 51.83 & 52.32 & 22.24 & 52.22 & 7.66 & 20.33 \\
8 & glo & 33.32 & 62.46 & 2.06 & 8.45 & 13.41 & 31.37 & 1.15 & 4.81 \\
 & loc & 33.49 & 62.66 & 32.53 & 35.52 & 13.45 & 41.43 & 2.94 & 11.84 \\
16 & glo & 23.03 & 52.78 & 1.32 & 5.91 & 7.14 & 30.65 & 1.11 & 4.34 \\
 & loc & 23.06 & 52.82 & 4.59 & 15.73 & 7.14 & 30.66 & 1.11 & 4.34 \\
32 & glo & 13.95 & 41.89 & 1.16 & 4.79 & 3.49 & 21.63 & 1.04 & 3.99 \\
 & loc & 13.95 & 41.90 & 5.14 & 13.94 & 3.49 & 21.63 & 1.04 & 3.99 \\
\hline
\multicolumn{10}{|c|}{Test 1b}\\
\hline
1 & glo & 78.29 & 88.08 & 6.56 & 16.93 & 53.04 & 72.65 & 2.79 & 9.67 \\
 & loc & 91.02 & 96.12 & 38.97 & 52.03 & 88.37 & 93.15 & 51.69 & 59.27 \\
2 & glo & 70.59 & 81.76 & 5.24 & 14.51 & 40.35 & 63.71 & 2.12 & 8.27 \\
 & loc & 88.84 & 94.15 & 41.88 & 53.38 & 81.03 & 88.12 & 51.27 & 58.32 \\
4 & glo & 46.51 & 69.45 & 3.73 & 12.20 & 23.80 & 51.73 & 1.35 & 6.66 \\
 & loc & 69.45 & 84.67 & 42.01 & 52.81 & 59.05 & 75.33 & 50.99 & 57.59 \\
8 & glo & 36.80 & 61.89 & 2.57 & 10.39 & 11.74 & 37.93 & 0.81 & 5.62 \\
 & loc & 42.93 & 67.85 & 45.92 & 54.65 & 14.94 & 42.03 & 49.05 & 55.58 \\
16 & glo & 21.12 & 49.11 & 1.97 & 9.02 & 5.48 & 26.81 & 0.41 & 3.88 \\
 & loc & 24.67 & 52.83 & 46.07 & 52.68 & 5.62 & 27.10 & 20.31 & 29.02 \\
32 & glo & 10.95 & 36.64 & 0.91 & 6.07 & 2.86 & 19.28 & 0.33 & 3.33 \\
 & loc & 11.00 & 36.72 & 29.71 & 36.34 & 2.86 & 19.29 & 0.32 & 3.92 \\
\hline
\end{tabular}
\caption{Tests 1a and 1b. MsGR with global and localized multiscale basis functions
}
\label{table:off-t1}
\end{table}

Geometry and triangulation are constructed using the Gmsh library \cite{geuzaine2009gmsh}. We use an unstructured locally refined mesh with 80,746 cells, 122,012 edges, and 41,231 nodes. 
We solve the diffusion problem \eqref{eq:ell} with zero Dirichlet boundary conditions on all boundaries and set the source term $q=1$. 
We consider isotropic and anisotropic backgrounds ($k_1$). In the isotropic case (\textit{Test 1a}), we set $K_1 = K_{iso} = 1$. In anisotropic case (\textit{Test 1b}), the diffusion tensor $K_1=K_{aniso} = (R_D)^T D R_D$ is defined by \eqref{eq:aniso-k} with $d_{1} = 1$ and $d_{2} = 10^{-4}$ and $\theta = \pi/3$. 

Figure \ref{fig:test1-msbasis} shows the multiscale basis functions obtained using the CF and MC approaches with both global and localized constructions. Localization is performed with $\delta_H=0.1$ (see Figure \ref{fig:dd}). 
In Figure \ref{fig:test1}, we present the coarse- and fine-scale solutions obtained with localized CF and MC approaches. The first row illustrates the solutions at the coarse degrees of freedom, where the values are shown at the cluster centroids, demonstrating that the coarse models remain physically meaningful. The corresponding coarse graph solutions are then projected onto the fine-scale resolution using the prolongation matrix $P$, and the results are plotted in the second row. Additionally, the errors $e_1$ and $e_2$ are reported.

In Table \ref{table:off-t1}, we present relative errors in \%. Localization is performed with $\delta_H=0.1$ (see Figure \ref{fig:dd}). We observe the error decay in $L_2$ and energy norms ($e_1$ and $e_2$) with respect to the number of basis functions $M$ corresponding to the number of local clusters, for two settings of $N_{\Omega} = 25$ and $N_{\Omega} = 100$ (number of subdomains). Both global (glo) and local (loc) strategies in basis construction are tested for the CF-approach and the MC-approach. As the number of local clusters $M$ increases, the error consistently decreases across all methods, with global construction performing better than the localized version. The larger number of local domains $N_{\Omega} = 100$ yields significantly improved accuracy, especially for the CF-approach. We see that with larger $M$, we obtain better localization for basis construction.

\subsection{Test 2: Anisotropic heat diffusion}

We consider anisotropic diffusion in a domain $\Omega = [0,1]^2$ for temperature $u = u(x)$
\[
- \nabla \cdot   q = f, \quad 
q  = k_{\parallel} \nabla_{\parallel} u + k_{\perp} \nabla_{\perp} u, 
\]
with $\nabla_{\parallel} (\cdot) = (b \cdot \nabla (\cdot)) b$ and $\nabla_{\perp} = \nabla - \nabla_{\parallel}$, $b=B/|B|$, where $B$ is the given magnetic field, and $k_{\parallel}$ and $k_{\perp}$ are the heat conductivity parallel and perpendicular to normalized magnetic field lines $b$, respectively.

\begin{figure}[h!]
\centering
\begin{minipage}[t]{0.95\textwidth} 
\centering
\includegraphics[width=1\textwidth]{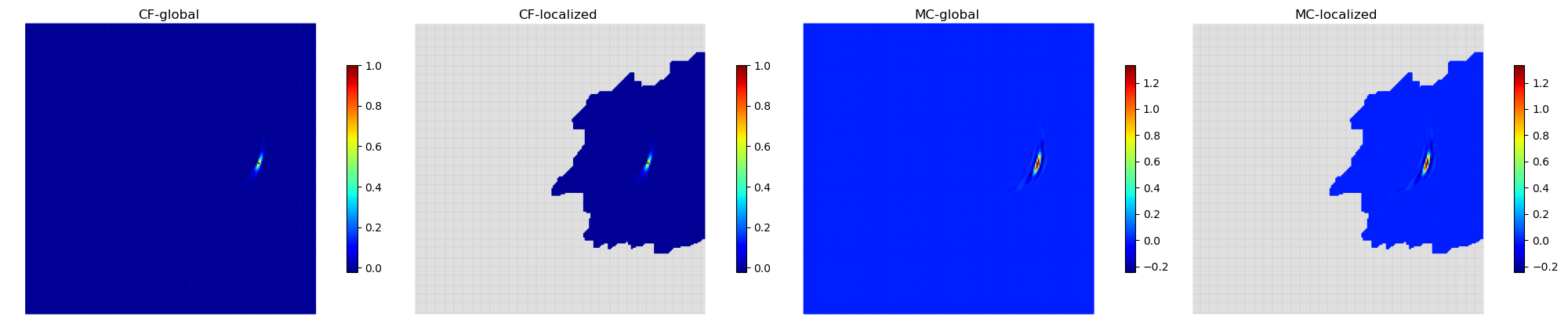}
\subcaption{Test 2a}
\end{minipage}
\begin{minipage}[t]{0.95\textwidth} 
\centering
\includegraphics[width=1\textwidth]{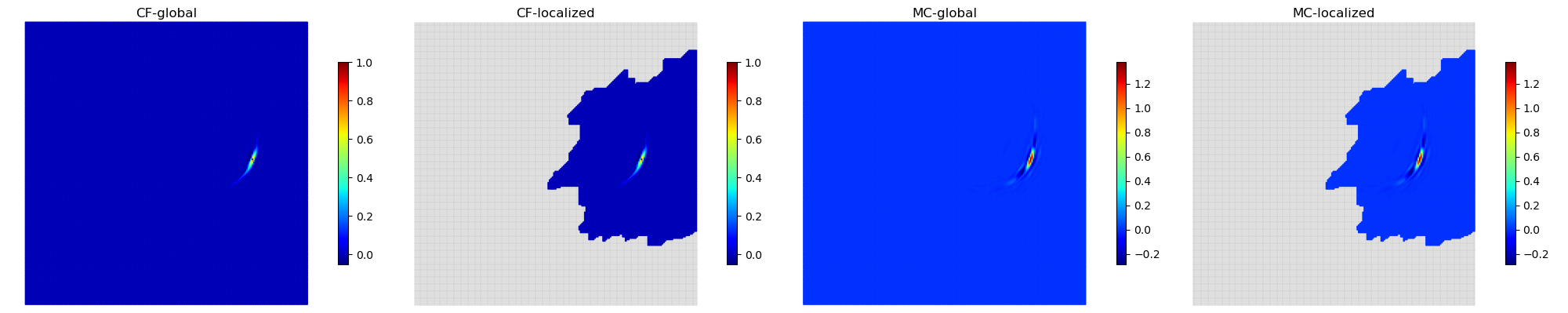}
\subcaption{Test 2b}
\end{minipage}
\caption{Tests 2a and 2b. Multiscale basis functions (global and local) for CF- and MC-approaches. $N_{\Omega}=100$ and $M=32$}
\label{fig:test2-msbasis}
\end{figure}

\begin{figure}[h!]
\centering
\begin{minipage}[t]{0.45\textwidth} 
\centering
\includegraphics[width=1\textwidth]{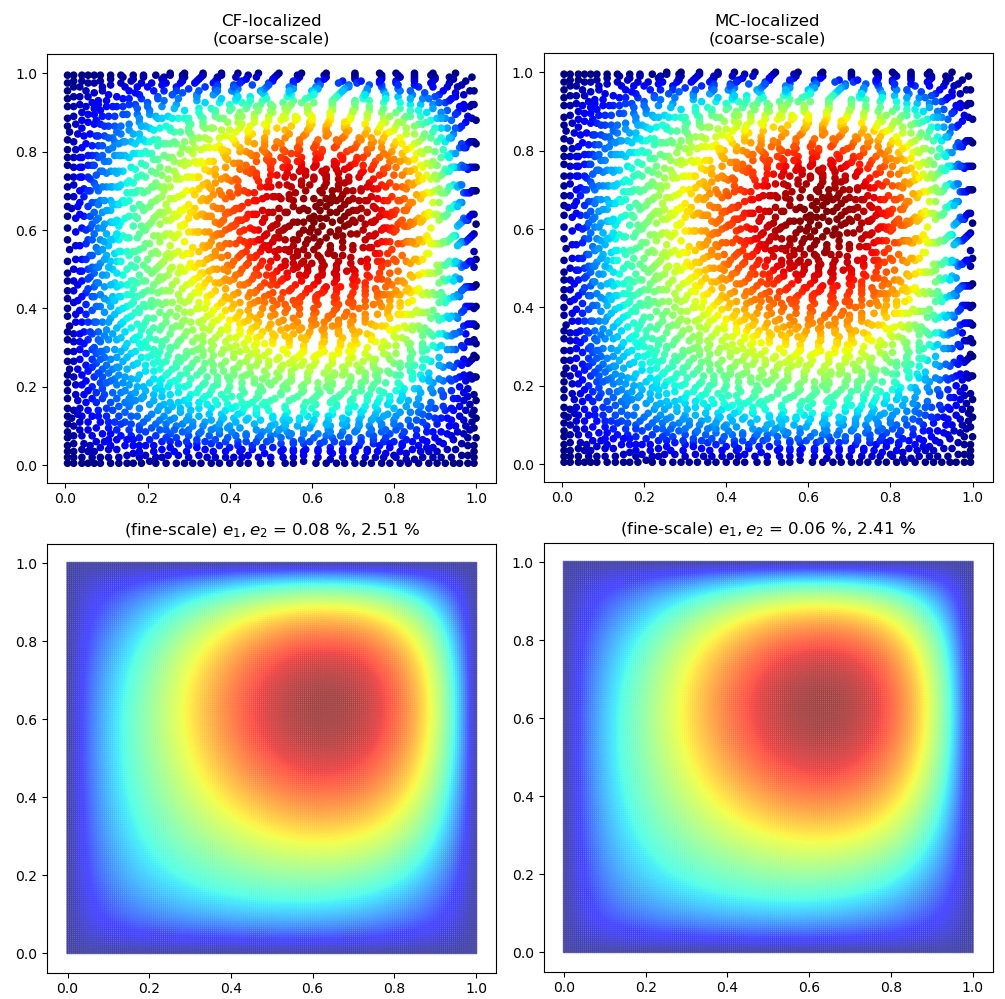}
\subcaption{Test 2a}
\end{minipage}
\ \ \ \
\begin{minipage}[t]{0.45\textwidth} 
\centering
\includegraphics[width=1\textwidth]{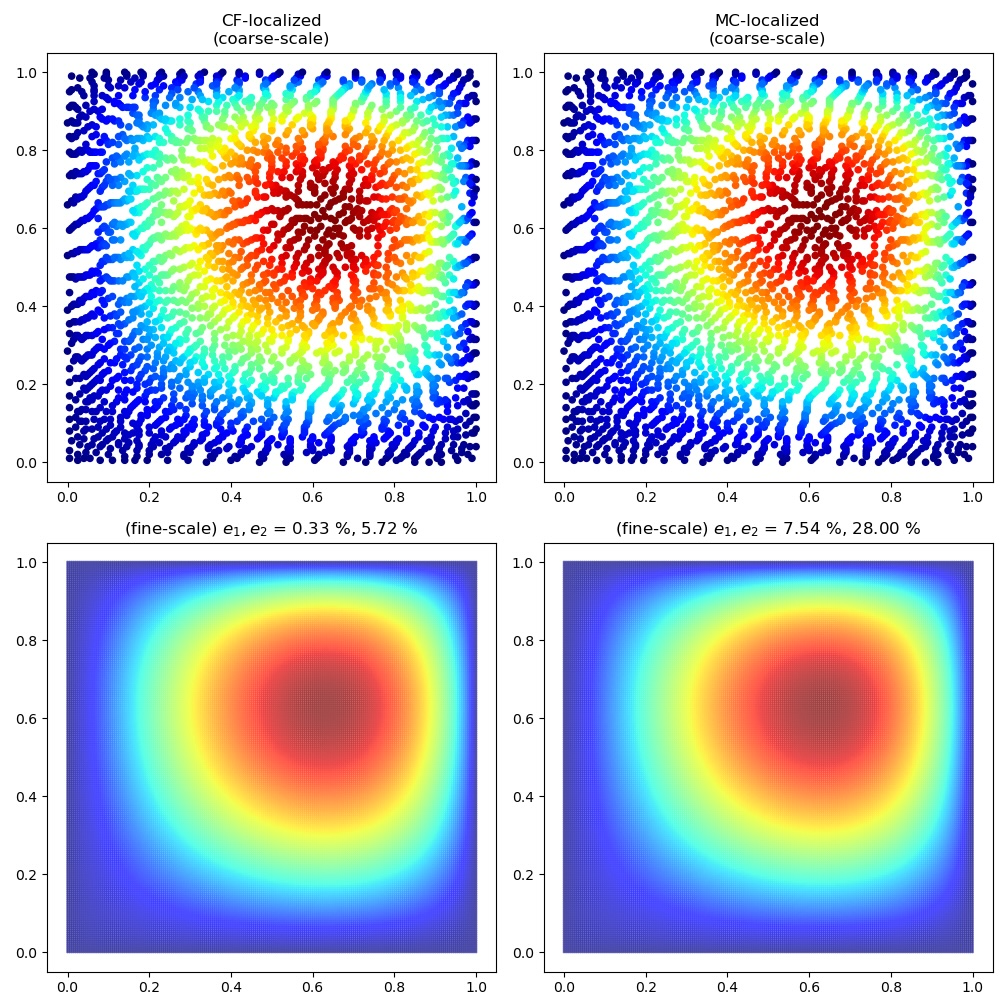}
\subcaption{Test 2b}
\end{minipage}
\caption{Tests 2a and 2b. Multiscale solution: coarse-scale solution $u_c$ (first row) and projected to fine-scale $u_{ms}$ (second row). 
$DOF_c = 3,200$ (coarse-scale, $N_{\Omega}=100$ and $M=32$) and $DOF = 40,401$ (fine-scale)}
\label{fig:test2}
\end{figure}



\begin{table}[ht!]
\centering
\begin{tabular}{|cc||cc|cc||cc|cc||}
\hline
\multirow{4}{*}{M} & 
& \multicolumn{4}{c||}{$N_{\Omega} = 25$} 
& \multicolumn{4}{c||}{$N_{\Omega} = 100$}\\ 
 & 
& \multicolumn{2}{c}{CF} & \multicolumn{2}{c||}{MC}
& \multicolumn{2}{c}{CF} & \multicolumn{2}{c||}{MC}   \\
& 
& $e_{1}$ & $e_{2}$  & $e_{1}$ & $e_{2}$  
& $e_{1}$ & $e_{2}$  & $e_{1}$ & $e_{2}$  \\
\hline
\multicolumn{10}{|c|}{Test 2a}\\
\hline
1 & glo & 8.39 & 30.05 & 0.01 & 0.13 & 2.58 & 16.08 & 0.01 & 0.24 \\
 & loc1 & 97.02 & 98.41 & 93.47 & 95.40 & 97.28 & 98.35 & 93.26 & 95.35 \\
 & loc2 & 88.54 & 94.39 & 73.43 & 84.05 & 86.85 & 92.86 & 67.87 & 80.72 \\
2 & glo & 4.62 & 21.89 & 0.01 & 0.16 & 1.36 & 11.35 & 0.01 & 0.51 \\
 & loc1 & 95.52 & 97.48 & 91.55 & 94.39 & 93.50 & 96.12 & 88.83 & 92.87 \\
 & loc2 & 76.27 & 87.89 & 61.45 & 77.20 & 64.74 & 80.07 & 39.28 & 61.48 \\
4 & glo & 2.75 & 16.20 & 0.01 & 0.31 & 0.70 & 7.93 & 0.00 & 0.71 \\
 & loc1 & 90.57 & 94.83 & 90.19 & 93.73 & 81.60 & 89.31 & 83.39 & 89.97 \\
 & loc2 & 55.06 & 74.79 & 50.93 & 70.01 & 17.93 & 42.51 & 19.86 & 43.94 \\
8 & glo & 1.38 & 11.30 & 0.01 & 0.52 & 0.34 & 5.46 & 0.01 & 0.84 \\
 & loc1 & 73.98 & 85.46 & 88.77 & 93.08 & 47.22 & 67.75 & 75.29 & 85.37 \\
 & loc2 & 18.17 & 43.20 & 36.66 & 59.82 & 1.18 & 11.31 & 10.52 & 31.80 \\
16 & glo & 0.73 & 7.98 & 0.00 & 0.70 & 0.16 & 3.71 & 0.01 & 0.94 \\
 & loc1 & 33.13 & 57.27 & 73.88 & 84.25 & 5.24 & 22.73 & 52.41 & 71.38 \\
 & loc2 & 2.51 & 16.51 & 16.15 & 39.94 & 0.17 & 3.87 & 1.16 & 10.73 \\
32 & glo & 0.36 & 5.52 & 0.01 & 0.85 & 0.08 & 2.51 & 0.01 & 1.03 \\
 & loc1 & 3.15 & 17.96 & 50.07 & 69.75 & 0.09 & 2.98 & 20.16 & 44.28 \\
 & loc2 & 0.42 & 6.36 & 7.02 & 26.33 & 0.08 & 2.51 & 0.06 & 2.41 \\
\hline
\multicolumn{10}{|c|}{Test 2b}\\
\hline
1 & glo & 0.87 & 8.46 & 0.00 & 0.12 & 0.30 & 4.86 & 0.01 & 0.82 \\
 & loc1 & 99.36 & 99.77 & 99.87 & 99.82 & 99.83 & 99.92 & 99.88 & 99.85 \\
 & loc2 & 98.14 & 99.33 & 99.66 & 99.62 & 99.31 & 99.71 & 99.50 & 99.61 \\
2 & glo & 0.38 & 5.53 & 0.00 & 0.56 & 0.24 & 4.87 & 0.15 & 3.79 \\
 & loc1 & 99.88 & 99.92 & 99.84 & 99.79 & 99.79 & 99.88 & 99.81 & 99.80 \\
 & loc2 & 97.68 & 99.11 & 99.37 & 99.45 & 99.06 & 99.56 & 98.47 & 99.15 \\
4 & glo & 0.29 & 4.76 & 0.03 & 1.68 & 0.27 & 5.31 & 0.47 & 6.70 \\
 & loc1 & 99.52 & 99.77 & 99.74 & 99.75 & 99.62 & 99.78 & 99.73 & 99.73 \\
 & loc2 & 96.40 & 98.55 & 97.98 & 98.86 & 97.07 & 98.68 & 96.52 & 98.19 \\
8 & glo & 0.23 & 4.58 & 0.18 & 4.21 & 0.31 & 5.58 & 0.67 & 8.05 \\
 & loc1 & 99.32 & 99.63 & 99.76 & 99.75 & 98.51 & 99.27 & 99.64 & 99.67 \\
 & loc2 & 93.41 & 97.15 & 96.97 & 98.43 & 83.22 & 92.36 & 93.68 & 96.82 \\
16 & glo & 0.26 & 5.18 & 0.40 & 6.22 & 0.33 & 5.73 & 0.86 & 9.15 \\
 & loc1 & 97.66 & 98.81 & 99.74 & 99.74 & 93.61 & 96.63 & 98.94 & 99.36 \\
 & loc2 & 84.35 & 92.52 & 95.21 & 97.75 & 5.41 & 23.44 & 75.74 & 87.93 \\
32 & glo & 0.29 & 5.42 & 0.65 & 7.98 & 0.33 & 5.71 & 1.10 & 10.34 \\
 & loc1 & 90.70 & 95.13 & 98.97 & 99.40 & 4.99 & 22.53 & 95.17 & 97.62 \\
 & loc2 & 44.02 & 66.49 & 86.38 & 93.70 & 0.33 & 5.72 & 7.54 & 28.00 \\
\hline
\end{tabular}
\caption{Tests 2a and 2b. MsGR with global and localized multiscale basis functions ($\delta_H=0.1$ for loc1 and $0.2$ for loc2)
}
\label{table:off-t2}
\end{table}

 Therefore, we solve the following formulation: 
\[
- \nabla \cdot (k_{\perp} \nabla u)
- \nabla \cdot ( k_{\Delta} \ {b} ({b} \cdot \nabla u) )
 = f, 
\]
with $k_{\Delta} = k_{\parallel} - k_{\perp}$. This is a simplified model for temperature evolution along magnetic fields in magnetic confinement fusion, which remains a very challenging problem due to dynamically evolving non-mesh-aligned anisotropy and anisotropy ratios of $10^{10}$ or larger, e.g., \cite{Wimmer2025}.

Homogeneous Dirichlet boundary conditions $u(x) = 0$ are imposed on $\partial\Omega$. We set $f=0$ and consider $k_{\parallel} = 1.0$ and $k_{\perp} = 10^{-1}$ in \textit{Test 2a} and $k_{\perp} = 10^{-3}$ in \textit{Test 2b}.  We use a structured $200 \times 200$ mesh with 80,000 cells, 120,400 edges, and 40,401 nodes.
Figure \ref{fig:test2-msbasis} shows the multiscale basis functions obtained using the CF and MC approaches with both global and localized constructions. We observe that, in both approaches, the basis functions align with the magnetic field. In Figures \ref{fig:test2},  we present a multiscale solution in coarse- and fine-scale representations. The first row illustrates the solutions at the coarse degrees of freedom as well as their magnetic field-aligned distribution of centroids (coarse nodes). The corresponding coarse-scale solutions are then projected onto the fine-scale resolution using the prolongation matrix $P$, and the results are plotted in the second row. 

In Table \ref{table:off-t2}, we present relative errors in \% for a varying number of basis functions $M$. We present results for global (glo) and localized basis construction, where we consider local domains corresponding to oversampled local domains $\Omega^+_i$ constructed with $\delta_H=0.1$ (loc1) and $\delta_H=0.2$ (loc2). We observe that a larger local domain is needed to achieve good localization due to the strong anisotropy. 
We see the error decay for different coarse space constructions as the number of multiscale basis functions (local clusters) per local domain increases. The plots compare the CF-approach and the MC-approach with global and localized basis functions for both moderate (Test 2a) and strong (Test 2b) anisotropies. For moderate anisotropy, both methods demonstrate rapid error reduction as $M$ increases. However, as anisotropy becomes larger in Test 2b, the effectiveness of localized methods diminishes for small $M$, and accurate coarse spaces only emerge when $M$ is sufficiently large and more global information is incorporated by considering a larger oversampled local domain in basis construction. The challenge of resolving highly anisotropic features with localized bases emphasizes the need for a global approach in the extreme anisotropy regime.

\subsection{Test 3: Pore network model}

We consider a time-dependent problem on a discrete structure defined by a pore network model, where assign a heterogeneous property $c_i$ for each node that can be associated with the pore volume or fluid compressibility \cite{bluntpnm}. 
The mass conservation equation leads to the following parabolic equation
\[
C u_t  + A u = f,   
\quad 0 < t \leq T,
\]
where $C = \text{diag}(c_1, \ldots , c_n)$ is the diagonal matrix. We set point-source at the two (Test 3a) and one (Test 3b) boundary points connected to channels and consider initial conditions $u = u_0$ at $t = 0$. 

\begin{figure}[h!]
\centering
\begin{minipage}[t]{0.32\textwidth} 
\centering
\includegraphics[width=1\textwidth]{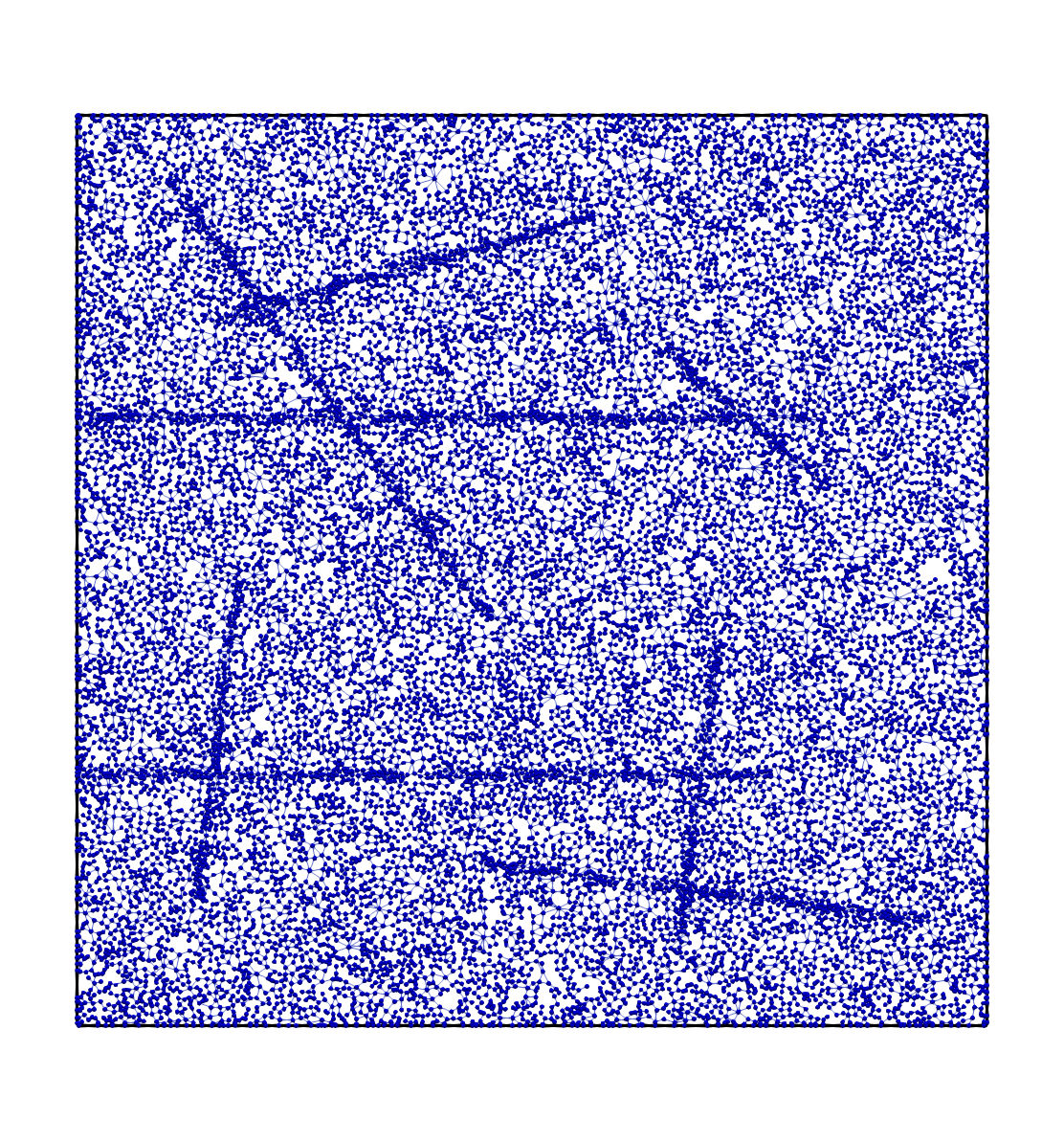}
\subcaption{Test 3a}
\end{minipage}
\ \ \ \ \ \ \ \ \ \ \
\begin{minipage}[t]{0.42\textwidth} 
\centering
\includegraphics[width=1\textwidth]{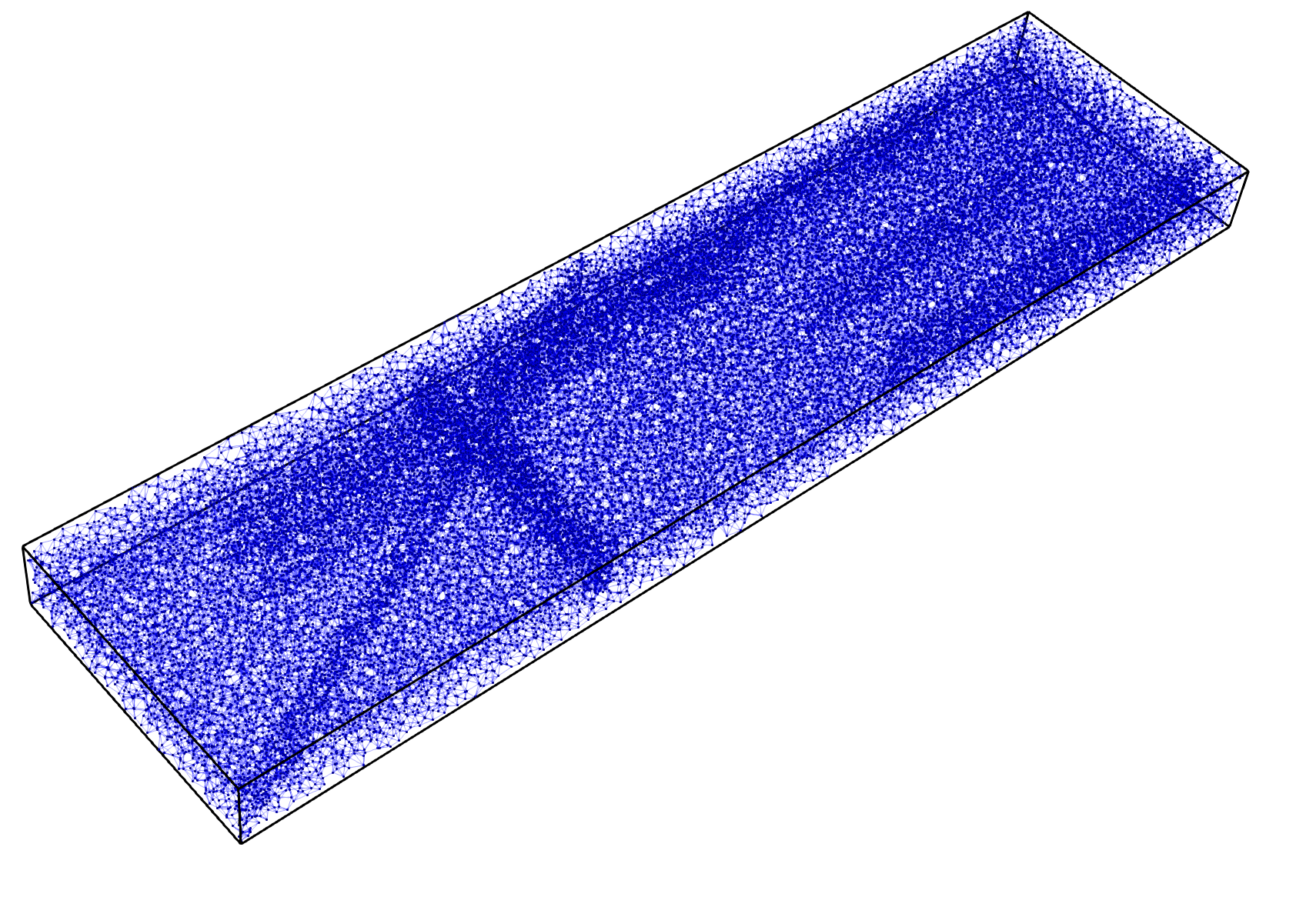}
\subcaption{Test 3b}
\end{minipage}
\caption{Test 3a and 3b. pore network structure}
\label{fig:test3g}
\end{figure}

\begin{figure}[h!]
\centering
\begin{minipage}[t]{0.43\textwidth} 
\centering
\includegraphics[width=1\textwidth]{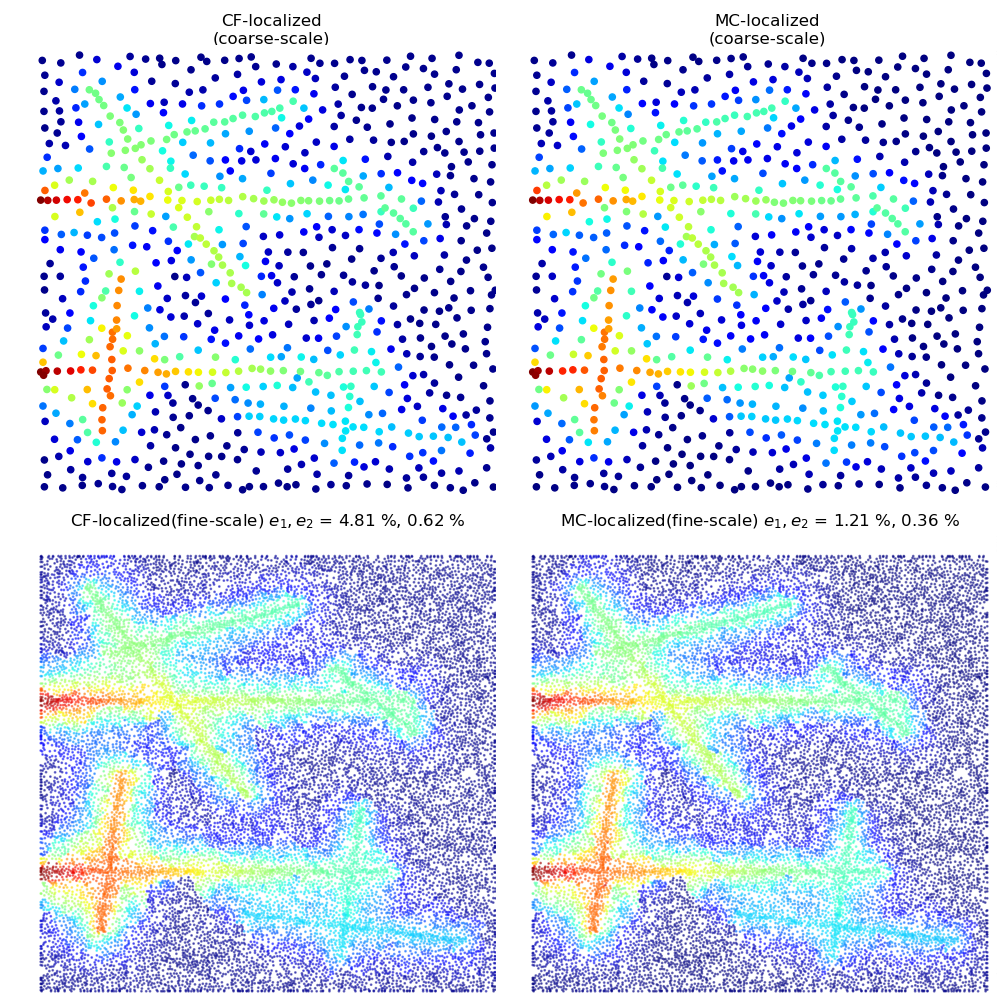}
\subcaption{Test 3a}
\end{minipage}
\ \ \ \ 
\begin{minipage}[t]{0.53\textwidth} 
\centering
\includegraphics[width=1\textwidth]{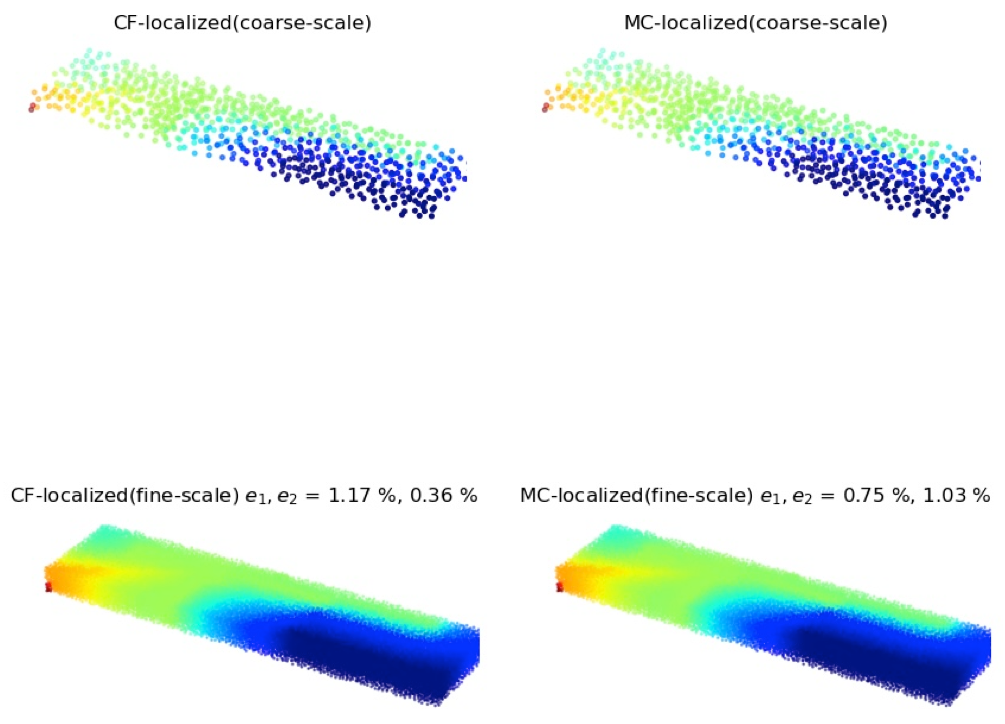}
\subcaption{Test 3b}
\end{minipage}
\caption{Tests 3a and 3b. Multiscale solution: coarse-scale solution $u_c$ (first row) and projected to fine-scale $u_{ms}$ (second row).
Test 3a: $DOF_c = 802$ (coarse-scale, $N_{\Omega}=100$ and $M=8$) and $DOF = 18,983$ (fine-scale). 
Test 3b: $DOF_c = 801$ (coarse-scale, $N_{\Omega}=100$ and $M=8$) and $DOF = 38,058$ (fine-scale)}
\label{fig:test3}
\end{figure}

\begin{table}[ht!]
\centering
\begin{tabular}{|cc||cc|cc||cc|cc||}
\hline
\multirow{4}{*}{M} & 
& \multicolumn{4}{c||}{$N_{\Omega} = 25$} 
& \multicolumn{4}{c||}{$N_{\Omega} = 100$}\\ 
 & 
& \multicolumn{2}{c}{CF} & \multicolumn{2}{c||}{MC} 
& \multicolumn{2}{c}{CF} & \multicolumn{2}{c||}{MC}  \\
& 
& $e_{1}$ & $e_{2}$  & $e_{1}$ & $e_{2}$ 
& $e_{1}$ & $e_{2}$  & $e_{1}$ & $e_{2}$ \\
\hline
\multicolumn{10}{|c|}{Test 3a}\\
\hline
1 & glo & 64.95 & 6.71 & 18.35 & 1.89 & 23.21 & 2.54 & 4.62 & 0.49 \\
 & loc & 66.09 & 7.35 & 54.78 & 6.33 & 53.26 & 6.29 & 74.39 & 8.72 \\
2 & glo & 45.78 & 4.80 & 11.10 & 1.15 & 15.74 & 1.77 & 2.43 & 0.26 \\
 & loc & 54.34 & 6.00 & 49.75 & 5.79 & 30.33 & 3.63 & 59.90 & 7.05 \\
4 & glo & 26.19 & 2.83 & 6.60 & 0.69 & 9.60 & 1.12 & 0.89 & 0.10 \\
 & loc & 26.76 & 3.03 & 40.55 & 4.83 & 9.60 & 1.18 & 32.68 & 4.00 \\
8 & glo & 17.89 & 1.97 & 3.41 & 0.36 & 4.81 & 0.62 & 0.29 & 0.04 \\
 & loc & 17.80 & 1.96 & 8.67 & 1.38 & 4.81 & 0.62 & 1.21 & 0.36 \\
16 & glo & 9.46 & 1.10 & 1.03 & 0.12 & 2.10 & 0.32 & 0.09 & 0.02 \\
 & loc & 9.46 & 1.10 & 1.14 & 0.14 & 2.10 & 0.32 & 0.12 & 0.04 \\
32 & glo & 4.82 & 0.62 & 0.32 & 0.04 & 0.96 & 0.18 & 0.03 & 0.01 \\
 & loc & 4.82 & 0.62 & 0.33 & 0.05 & 0.96 & 0.18 & 0.03 & 0.01 \\
\hline
\multicolumn{10}{|c|}{Test 3b}\\
\hline
1 & glo & 11.15 & 2.78 & 2.20 & 0.84 & 5.82 & 1.17 & 0.63 & 0.35 \\
 & loc & 82.22 & 30.74 & 83.06 & 32.08 & 82.26 & 30.73 & 86.57 & 33.15 \\
2 & glo & 7.68 & 1.51 & 0.80 & 0.43 & 4.11 & 0.87 & 0.33 & 0.23 \\
 & loc & 81.85 & 30.74 & 79.01 & 30.51 & 15.82 & 7.15 & 71.34 & 27.51 \\
4 & glo & 6.29 & 1.25 & 0.29 & 0.22 & 2.32 & 0.56 & 0.10 & 0.13 \\
 & loc & 17.47 & 8.23 & 52.28 & 20.49 & 1.64 & 0.59 & 40.90 & 16.53 \\
8 & glo & 4.09 & 0.87 & 0.22 & 0.18 & 1.24 & 0.38 & 0.03 & 0.10 \\
 & loc & 3.46 & 1.49 & 50.01 & 19.67 & 1.17 & 0.36 & 0.75 & 1.03 \\
16 & glo & 2.39 & 0.57 & 0.09 & 0.12 & 0.56 & 0.29 & 0.02 & 0.14 \\
 & loc & 2.32 & 0.56 & 12.66 & 6.18 & 0.56 & 0.29 & 0.08 & 0.28 \\
32 & glo & 1.25 & 0.38 & 0.02 & 0.09 & 0.23 & 0.32 & 0.02 & 0.18 \\
 & loc & 1.24 & 0.38 & 0.54 & 0.84 & 0.23 & 0.32 & 0.03 & 0.19 \\
\hline
\end{tabular}
\caption{Tests 3a and 3b. MsGR with global and localized multiscale basis functions
}
\label{table:off-t3}
\end{table}

To define a coarse grid approximation of unsteady system, we form a coarse-grained system as follows
\[
C_c (u_c)_t + A_c u_c = f_c \quad  \text{with} \quad 
A_c = R A P, \quad C_c = R C P, \quad f_c = R f\quad  \text{and} \quad u_{ms} = P u_c.
\]
We let $\tau$ be an uniform time step size and $u^n$ be a solution at time $t^n$, where $t^n = \ell \tau$ for $\ell =1,\ldots,N_t$. Then, using the implicit backward Euler approximation for the time derivative, we obtain the following fully discrete coarse-grained system
\[
C_c \frac{u_c^{n+1} - u_c^n}{\tau} + A_c^{n+1} u_c^{n+1} = f_c, \quad 
A_c = R A P, \quad C_c = R C P, \quad f_c = R f, \quad
u_{ms} = P u_c.
\]
with initial condition, $u_c^{n=0} = u_{c,0}$. We simulate for $T = 10^2$ with 20 time steps. 

We consider two test cases corresponding to two pore-structures (Figure \ref{fig:test3g}). In Test 3a, the geometry and connectivity of the network are extracted from two images with differing porosity using a network generation algorithm from the openpnm library \cite{gostick2017versatile}. The resulting pore network in Test 3a consists of 18,983 nodes and 27,936 connections, corresponding to a structure embedded in a square domain, $[0, 1] \times [0, 0.25] \times [0, 0.065]$. 
In Test 3b, we have a network with 38,058 nodes and 113,000 connections, corresponding to a structure embedded in a square domain, $[0, 3000]^2$. 
Connection weights are assigned based on Poiseuille flow through tubes, using the geometric characteristics of the network, with values ranging from $0.0008 \leq w_{ij} \leq 18.85$ in Test 3a and from $0.0016 \leq w_{ij} \leq 46.64$ in Test 3b. For coefficients $c_i$, the values range from $0.1 \leq c_{i} \leq 0.82$ in Test 3a and from $0.1 \leq c_{i} \leq 1$ in Test 3b. 

In Figures \ref{fig:test3},  we present a multiscale solution in coarse- and fine-scale representations. 
In Table \ref{table:off-t3}, we present relative errors in \% for varying number of basis functions $M$ for  $N_{\Omega} = 25$ and 100. We present relative errors in $L_2$ ($e_1$) and energy norm ($e_2$) at the final time. 
We observe an error decrease for larger M and $N_{\Omega}$ as well as a good localization with a larger number of local clusters.

\section{Conclusion}

We have proposed a multiscale graph-based reduction method for upscaling heterogeneous and anisotropic diffusion problems. The method combines balanced domain decomposition with local spectral clustering to construct multiple coarse-scale variables that effectively capture key local features. Two strategies for the construction of a prolongation operator and the assembly of a coarse-scale system have been developed. 
Numerical experiments on diverse test cases are presented, including perforated domains, channelized media, strongly anisotropic coefficients, and discrete pore-network models, which confirm the robustness and efficiency of the proposed upscaling framework. 
We find that the global MC approach yields the most accurate coarse model, even with a single vector and the original subdomains without clustering. In this setting, clustering plays a crucial role in localization and can be used to sparsify the coarse basis when combined with energy minimization. Although the global MC approach requires solving a saddle-point system to construct the energy-minimizing basis, it remains practical with aggressive coarsening and approximate solvers. This suggests that the method warrants further study, particularly with more efficient approximations~\cite{scheichl}. For the local approaches, the MC method yields the best results; however, effective localization requires a larger number of local basis functions. By contrast, the CF approach benefits more directly from localization, while the MC approach becomes competitive only for larger numbers of basis functions (clusters and centroids).


\section*{Acknowledgments}
BSS was supported by the DOE Office of Advanced Scientific Computing Research Applied Mathematics program through Contract No. 89233218CNA000001. LANL report number LA-UR-25-29298.

\bibliographystyle{unsrt}
\bibliography{lit}

\appendix
\section{Convergence analysis}\label{app1}
In this section, we derive convergence estimates for both proposed coarsening approaches.
We introduce the discrete weighted norms  \cite{vasilyeva2025generalized, vasilyeva2025msaniso} 
\[
\|v\|^2_{D} = v^T D v 
= \sum_{i \in \mathcal{V}} d_i v_i^2, \quad 
\|v\|^2_{L} = v^T L v 
 = \sum_{\substack{(i,j)\in \mathcal{E} \\ i,j \in \mathcal{V}}} 
w_{ij} (v_i-v_j)^2,\quad 
\|v\|^2_{A} = v^T A v.
\]


For \eqref{eq:coarse}, we have $u_{ms} = P u_c = P(RAP)^{-1}Rf$, then
\[
u - u_{ms} = u - P(RAP)^{-1}Rf = u - P(RAP)^{-1}RAu
= (I - P(RAP)^{-1}RA)u = (I - \Pi) u, 
\]
with $\Pi = P(RAP)^{-1}RA$.  
We have the Galerkin orthogonality
\[
R A (u - u_{ms}) = R (Au - Au_{ms}) = R(f - Au_{ms}) = 0,
\]
since $A_c u_c - f_c = RAPu_c - Rf = R(A u_{mc} - f) = 0$ and 
\[
\Pi (u - u_{ms}) =  P(RAP)^{-1}RA(u - u_{ms}) = 0.
\]

Let $I_k = I^{\Omega_k^{+,h}} \in \mathbb{R}^{n_k\times n}$denote the restriction operator onto $\Omega_k^+$,
and $A^{\Omega_k^+} = I_k A I_k^T \in \mathbb{R}^{n_k\times n_i}$, $n_k = |\Omega_k^+|$. 
With $P = [ I_1^T P_1,\dots,I_{N_{\Omega}}^T P_{N_{\Omega}} ]$, the Galerkin orthogonality is equivalent to the block relations
\[
R_k I_k A (u-u_{ms}) = R_k \big( I_k A I_k^T \big) I_k (u-u_{ms}) 
=
R_k A^{\Omega_k^+}\, I_k (u-u_{ms}) = 0, 
\quad k=1,\dots,N_{\Omega}.
\]
Then 
\[ 
\pi_k \big(I_k (u-u_{ms})\big) = 0
\quad \text{ with } \quad 
\pi_k = P_k\big(R_k A^{\Omega_k^+} P_k\big)^{-1} R_k A^{\Omega_k^+}.
\]

The proposed multiscale method begins by defining coarse variables, which are constructed using local spectral clustering within the non-overlapping subdomains $\Omega_k$ (subdomains). Local spectral clustering divide $\Omega_k$ into $r$ non-oversampling subsubdomains $\mathcal{A}_k^{r}$ ($\Omega_k = \cup_{r=1}^{M_k} \mathcal{A}_k^{r}$) and ensures that the contrast of the edge weights $w_{ij}$, and consequently the degrees $d_i$, within each local cluster $\mathcal{A}_k^{r}$ is small. 

The bounds for a local cluster $\mathcal{A}_k^{r}$ define local ratio
\[
C_{{ratio}}^{k,r} = \frac{\overline{d}^{r,k}}{\underline{d}^{r,k}} \approx \frac{\overline{w}^{r,k}}{\underline{w}^{r,k}}, \quad 
\underline{w}^{k,r} \leq w_{ij} \leq \overline{w}^{k,r}, \quad 
\underline{d}^{k,r} \leq d_i \leq \overline{d}^{k,r}, \quad i,j \in \mathcal{A}_k^r,
\]
where $C_{{ratio}}^{k,r}$ captures the intra-cluster weight/degree contrast and $d_i= -\sum_j w_{ij}$.
For special cases, such as channelized media, we may have $C_{{ratio}}^{k,r} \approx 1$, indicating that the weights within each cluster are nearly uniform and exhibit similar connectivity patterns. In anisotropic media, the clustering aligns with the anisotropy directions.
The choice of $M_k$ (number of local clusters) directly influences the quality of the clustering and the resulting bounds. As $M_k$ increases, each cluster $\mathcal{A}_k^{r}$ becomes smaller and more homogeneous, leading to smaller values of $C_{ratio}^{k,r}$. 

We let $H_{k,r} = \text{diam}(\mathcal{A}_{k}^r)$ and $C_p > 0$, then using the Poincar\'e inequality, we obtain
\begin{equation}
\|v\|_{D^{\mathcal{A}_{k}^{r}}}^2 \leq 
\overline{d}^{r,k} \|v\|_k^2 = 
\underline{d}^{r,k} \frac{\overline{d}^{r,k}}{\underline{d}^{r,k}} \|v\|_k^2  \leq 
\underline{d}^{r,k}  C_{{ratio}}^{k,r} C_p^2  H_{k,r}^2 \|\nabla v\|^2_k
\leq 
 C_{{ratio}}^{k,r} C_p^2  H_{k,r}^2 \|v\|^2_{L^{\mathcal{A}_{k}^{r}}},
 \label{eq:locpoincare}
\end{equation}
where 
$\|v\|_k^2 = \sum_{i \in \mathcal{A}_{k}^{r}} v_i^2$ and 
$\|\nabla v\|^2_k = \sum_{\substack{(i,j)\in \mathcal{E} \\ i,j \in \mathcal{A}_{k}^{r}}} (v_i-v_j)^2$ are the unweighted and discrete gradient norms \cite{coulhon1998random, badr2012weighted}.
Then for  $\Omega_k=\bigcup_{r} \mathcal{A}_k^r$, we have \cite{brannick2019role, chung2018constraint, zhao2020analysis}
\begin{equation}
\|v\|_{D^{\Omega_k}}^2
\le C_{\mathrm{ratio}}^k C_p^2 H_k^2
\sum_r \|v\|_{L^{\mathcal A_k^r}}^2
= C_{\mathrm{ratio}}^k C_p^2 H_k^2 \|v\|_{L^{\Omega_k}}^2
\leq C_{\mathrm{ratio}}^k\,C_p^2\,H_k^2  \|v\|_{A^{\Omega_k}}^2,
\label{eq:lap}
\end{equation}
with 
$k_{\min}^k := \min_r k_{\min}^{\,k,r}$, 
$C_{\mathrm{ratio}}^k = \max_r C_{\mathrm{ratio}}^{k,r}$ and $H_k := \max_r H_{k,r}$.


Additionally in CF-approach, we have the following block structure \cite{manteuffel2019convergence, brannick2019role}
\[
P =  \begin{bmatrix}
W \\
I
\end{bmatrix}, 
\quad 
R =  \begin{bmatrix}
Z & I
\end{bmatrix}, 
\quad 
A = 
\begin{bmatrix}
A_{ff} & A_{fc} \\
A_{cf} & A_{cc}
\end{bmatrix}
=
\begin{bmatrix}
I & 0 \\
A_{cf}A_{ff}^{-1} & I
\end{bmatrix}
\begin{bmatrix}
A_{ff} & 0 \\
0 & S
\end{bmatrix}
\begin{bmatrix}
I & A_{ff}^{-1}A_{fc} \\
0 & I
\end{bmatrix}, 
\]
with $W = -A_{ff}^{-1} A_{fc}$,  $Z =-A_{cf} A_{ff}^{-1}$ and $S = A_{cc} - A_{cf}A_{ff}^{-1}A_{fc}$. 
Then, we have $RAP = S$ 
and 
\[
\Pi = P(RAP)^{-1}RA =
\begin{bmatrix}
0 & W \\
0 & I
\end{bmatrix}, 
\quad  \text{then} \quad 
(I - \Pi) e = \begin{bmatrix}
I & -W \\
0 & 0
\end{bmatrix}
\begin{bmatrix}
e_f \\
e_c
\end{bmatrix}
 = 
 \begin{bmatrix}
 e_f - W e_c\\
 0
 \end{bmatrix}, 
 \quad 
 e = u- u_{ms}.
\]

Therefore based on this special structure of the error in CF-approach, we have \cite{stuben1999algebraic}
\[
\| (I - \Pi) e\|^2_A 
= e^T A e - e_c^T S e_c \leq ||e||^2_A,
\quad S = A_{cc} - A_{cf}A_{ff}^{-1}A_{fc}.
\]

\begin{lemma}\label{lem1}
Let $u$ be solution of \eqref{eq:sys-f} and $u_{ms}$ be a solution of \eqref{eq:coarse}, then we have
\begin{equation}
\|u - u_{ms}\|_D^2  \leq 
C^2 H^2 C_{{ratio}} \|u - u_{ms}\|^2_A,
\label{eq:sap}
\end{equation}
with $H = \max_{k,r} H_{k,r}$ and $C_{{ratio}} = \max_{k,r} C_{{ratio}}^{k,r}$. 

\end{lemma}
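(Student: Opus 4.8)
The plan is to reduce the global estimate to a sum of purely local Poincar\'e-type inequalities on the individual clusters $\mathcal{A}_k^r$ and then reassemble them using the additivity of the graph Laplacian seminorm over edges. Write $e = u - u_{ms} = (I-\Pi)u$ with $\Pi = P(RAP)^{-1}RA$, and recall from the Galerkin orthogonality established above that $\Pi e = 0$. The first step is to record the \emph{enabling condition} that lets \eqref{eq:locpoincare} be applied to $e$ on each cluster: the error must be annihilated by the constant mode on every $\mathcal{A}_k^r$. In the CF-approach this follows directly from the block form $(I-\Pi)e = (e_f - We_c,\,0)$ derived above; since $e=(I-\Pi)e$, the coarse components vanish, so $e$ is zero at every cluster centroid. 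In the MC-approach, the saddle-point system \eqref{ms-cem-gl} forces $A\psi_r^{\Omega_k}\in\mathrm{range}\big((S^{\mathcal{A}})^T\big)$, hence $\mathrm{range}(P)$ is $A$-orthogonal to $\ker S$; a dimension count ($\dim\mathrm{range}(P)=n_c=\mathrm{rank}\,S$, so $n_c+\dim\ker S=n$) then gives the $A$-orthogonal splitting $\mathrm{range}(P)\oplus\ker S$ of the whole space, whence the $A$-projection error lies in $\ker S$, i.e. $S^{\mathcal{A}_k^r}e=0$ for every cluster. Either way, $e$ has mean zero (or vanishes at the centroid) on each $\mathcal{A}_k^r$.

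Second, I would split the $D$-norm over the disjoint clustering $\mathcal{V}=\bigcup_{k,r}\mathcal{A}_k^r$,
\[
\|e\|_D^2 = \sum_{k=1}^{N_\Omega}\sum_{r=1}^{M_k}\|e\|_{D^{\mathcal{A}_k^r}}^2,
\]
and apply the local inequality \eqref{eq:locpoincare} on each connected cluster, using the enabling condition from the previous step to keep the constant finite:
\[
\|e\|_{D^{\mathcal{A}_k^r}}^2 \le C_{\text{ratio}}^{k,r}\,C_p^2\,H_{k,r}^2\,\|e\|_{L^{\mathcal{A}_k^r}}^2.
\]
Pulling out the worst-case constants $C_{\text{ratio}}^{k,r}\le C_{\text{ratio}}$ and $H_{k,r}\le H$ yields
\[
\|e\|_D^2 \le C_p^2\,C_{\text{ratio}}\,H^2 \sum_{k,r}\|e\|_{L^{\mathcal{A}_k^r}}^2.
\]

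Third, I would reassemble the right-hand side through the edge-additivity of the Laplacian seminorm. Because each intra-cluster edge set is a subset of $\mathcal{E}$, and every omitted inter-cluster and inter-subdomain edge contributes the nonnegative term $w_{ij}(e_i-e_j)^2$ to the global seminorm, one gets $\sum_{k,r}\|e\|_{L^{\mathcal{A}_k^r}}^2 \le \|e\|_L^2$, and finally $\|e\|_L^2 \le \|e\|_A^2$ since $A=L+B$ with $B$ diagonal and nonnegative by \eqref{eq:sys-f}. Combining the three displays gives \eqref{eq:sap} with $C=C_p$.

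The main obstacle is the first step: verifying that the coarse construction genuinely removes the constant mode from the error on each cluster, which is exactly what makes the local Poincar\'e constant finite and proportional to the cluster-diameter scaling $H_{k,r}$. A secondary but essential point is to keep the estimate on the Laplacian \emph{seminorm} $\|\cdot\|_{L^{\mathcal{A}_k^r}}$ rather than passing to the full local stiffness form $\|\cdot\|_{A^{\Omega_k}}^2$ used on the right of \eqref{eq:lap}: a naive restriction of $A$ to a subdomain introduces boundary diagonal terms whose cross-products $2\,w_{ij}e_ie_j$ are of indefinite sign, so only the seminorm preserves the direction of the additivity inequality. One must also assume each cluster subgraph is connected so that constants are the only kernel of $L^{\mathcal{A}_k^r}$, which is guaranteed by the spectral clustering.
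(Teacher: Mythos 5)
Your proof is correct and uses the same two essential ingredients as the paper --- Galerkin orthogonality of the error and the cluster-level Poincar\'e inequality \eqref{eq:locpoincare} --- but it routes them differently, and in a way that is arguably tighter. The paper first writes $\|e\|_D^2$ as a sum over the \emph{oversampled} subdomains $\Omega_k^+$, inserts the local projections $\pi_k = P_k(R_kA^{\Omega_k^+}P_k)^{-1}R_kA^{\Omega_k^+}$ annihilating $e_k = I_k e$, reduces from $\Omega_k^+$ back to $\Omega_k$ at the price of an overlap constant $C_o$, and only then invokes the subdomain-level estimate \eqref{eq:lap}; the final constant is $C^2 = C_oC_p^2$. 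You instead split $\|e\|_D^2$ directly over the disjoint clusters $\mathcal{A}_k^r$, apply \eqref{eq:locpoincare} cluster by cluster, and reassemble through edge-additivity of the Laplacian seminorm, obtaining the bound with $C = C_p$ and no overlap factor. Two things in your write-up are genuine improvements over the paper's exposition: you state explicitly the \emph{enabling condition} for the Poincar\'e step (vanishing of $e$ at each centroid in the CF case via $e_c = 0$; $S^{\mathcal{A}_k^r}e = 0$ in the MC case via the $A$-orthogonal splitting $\mathrm{range}(P)\oplus\ker S$ induced by \eqref{ms-cem-gl}), which the paper leaves implicit in its use of $\pi_k$ and $r_k$; and you correctly insist on staying with the seminorm $\|\cdot\|_{L^{\mathcal{A}_k^r}}$ for the reassembly, whereas the paper passes through $\|r_k\|_{A^{\Omega_k}}^2$ and sums these local stiffness forms without justifying that the sum is controlled by $\|e\|_A^2$. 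Two caveats: your centroid/mean-zero verification is carried out for the \emph{global} constructions, so for the localized bases ($P_{loc}$) the enabling condition needs the local-projection argument the paper sketches (which is where its $C_o$ and $\Omega_k^+$ come from); and your inequalities $\sum_{k,r}\|e\|_{L^{\mathcal{A}_k^r}}^2\le\|e\|_L^2\le\|e\|_A^2$ require nonnegative edge weights and $A = L + B$ with $B\succeq 0$ as in \eqref{eq:sys-f}, i.e.\ they hold in the pore-network setting but require additional care for the signed graph Laplacian extracted from a general anisotropic stiffness matrix. With those qualifications, your argument establishes \eqref{eq:sap} for the global CF and MC constructions and is a legitimate, more elementary alternative to the paper's proof.
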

\begin{proof}
Since the subdomains $\{\Omega_k\}$ are non-overlapping and based on the Galerkin orthogonality, we have
\[
\|u-u_{ms}\|_D^2  = \|e\|^2_D 
=
\sum_{k=1}^{N_{\Omega}} \|e_k\|_{D^{\Omega_k^+}}^2  
 = 
\sum_{k=1}^{N_{\Omega}} \|(I - \pi_k) e_k\|_{D^{\Omega_k^+}}^2  
 = 
\sum_{k=1}^{N_{\Omega}} \|r_k\|_{D^{\Omega_k^+}}^2 ,
\]
with
$e = u-u_{ms}$, $e_k = I_k e$ and  $r_k = (I - \pi_k) e_k$.

For the CF-approach, we have
\[
 \sum_{k=1}^{N_{\Omega}} \|r_k\|_{D^{\Omega_k^+}}^2  
=
\sum_{k=1}^{N_{\Omega}}  \sum_{j=1}^{\Omega_j \in \Omega_k^+}  \|r_k\|_{D^{\Omega_j}}^2
\leq C_o \sum_{k=1}^{N_{\Omega}}   \|r_k\|_{D^{\Omega_k}}^2
\]
where $C_o$ depends on  the maximum number of overlapping subdomains sharing the same $\Omega_k$.

With the MC approach and the basis construction constraints, we obtain  \cite{chung2018constraint, zhao2020analysis}
\[
 \sum_{k=1}^{N_{\Omega}} \|r_k\|_{D^{\Omega_k^+}}^2  
 =
\sum_{k=1}^{N_{\Omega}}   \|r_k\|_{D^{\Omega_k}}^2
\]
Then using the local estimate \eqref{eq:lap} on each $\Omega_k$, we obtain
\[
\|e\|_D^2 
\leq  C_o 
\sum_{i=1}^{N_{\Omega}}  
C_{\mathrm{ratio}}^k\,C_p^2\,H_k^2 \|r_k\|_{A^{\Omega_k}}^2
\leq  
C^2 H^2 C_{{ratio}} \|r\|^2_A
\leq  
C^2 H^2 C_{{ratio}} \|e\|^2_A,
\]
with $C^2=C_o C_p^2$ ($C_o=1$ in MC-approach), $H = \max_{k} H_{k}$ and $C_{{ratio}} = \max_{k} C_{{ratio}}^{k}$.
\end{proof}


\begin{theorem}\label{terr}
Let $u$ be solution of \eqref{eq:sys-f} and $u_{ms}$ be a solution of \eqref{eq:coarse}, then we have
\begin{equation}
\|u - u_{ms}\|_A \leq C H C_{{ratio}}^{1/2} \|f\|_{D^{-1}}.
\label{eq:est-t1}
\end{equation}
\end{theorem}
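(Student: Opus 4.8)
The plan is to convert the norm-equivalence bound of Lemma~\ref{lem1} into the desired a priori estimate \eqref{eq:est-t1} using a standard energy/duality argument built on the Galerkin orthogonality already established above. First I would set $e = u - u_{ms}$ and observe that, since $R = P^T$ and $A$ is symmetric positive definite, the operator $\Pi = P(RAP)^{-1}RA$ is the $A$-orthogonal projection onto $\mathrm{range}(P)$; the orthogonality relation $\Pi e = 0$ then says precisely that $e$ is $A$-orthogonal to $\mathrm{range}(P)$. Because $u_{ms} = P u_c \in \mathrm{range}(P)$, this yields $e^T A u_{ms} = 0$, and hence
\[
\|e\|_A^2 = e^T A e = e^T A u = e^T f,
\]
where the final equality uses that $u$ solves $Au = f$ in \eqref{eq:sys-f}.

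Next I would bound the right-hand side by a Cauchy--Schwarz inequality taken in the $D$-inner product. Writing $e^T f = (D^{1/2} e)^T (D^{-1/2} f)$ gives
\[
\|e\|_A^2 = e^T f \leq \|e\|_D \, \|f\|_{D^{-1}}.
\]
Finally I would invoke Lemma~\ref{lem1} to replace $\|e\|_D$ with $C H C_{{ratio}}^{1/2}\,\|e\|_A$, obtaining $\|e\|_A^2 \leq C H C_{{ratio}}^{1/2} \|e\|_A \|f\|_{D^{-1}}$; dividing through by $\|e\|_A$ (the case $e = 0$ being trivial) produces \eqref{eq:est-t1}.

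In this decomposition essentially all of the analytical content is carried by Lemma~\ref{lem1} and the local Poincar\'e inequality \eqref{eq:locpoincare} behind it, so the passage from the lemma to the theorem is routine. The one step that genuinely requires care is the opening identity $\|e\|_A^2 = e^T f$: it rests on the $A$-orthogonality of the Galerkin error, which in turn uses both the symmetry of $A$ and the choice $R = P^T$. I therefore expect the main obstacle to be verifying that this orthogonality framework applies cleanly to the Robin-modified operator $A = L + B$ of \eqref{eq:sys-f} — one must confirm that $A$ is genuinely symmetric positive definite (it is, since $L$ is SPSD and $B$ is a nonnegative diagonal whose boundary contribution removes the constant null mode), so that $\Pi$ is truly the $A$-orthogonal projection and the cancellation $e^T A u_{ms} = 0$ holds exactly. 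Once definiteness is secured, the remaining inequalities are immediate.
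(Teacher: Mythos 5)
Your proposal is correct and follows essentially the same route as the paper: Galerkin/$A$-orthogonality of the error to $\mathrm{range}(P)$ gives $\|e\|_A^2 = e^T f$, a $D$-weighted Cauchy--Schwarz gives $e^T f \le \|e\|_D\,\|f\|_{D^{-1}}$, and Lemma~\ref{lem1} closes the bound after dividing by $\|e\|_A$. The only cosmetic difference is that the paper verifies $(u-u_{ms})^T A u_{ms}=0$ by a direct computation with $R=P^T$ and $RAPu_c=Rf$ rather than via the projection $\Pi$, and it does not pause over the definiteness of $A=L+B$ as you do.
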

\begin{proof}
Based on \eqref{eq:sys-f} and \eqref{eq:coarse} with $A = A^T$, $R=P^T$, we have
\[
(u - u_{ms})^T A u_{ms} = (Au)^T P u_c -  u_c^T R A P u_c 
= (Au)^T P u_c -  u_c^T R f = (Au)^T P u_c -  (P u_c)^T f = 0.
\]
Therefore using the Cauchy–Schwarz inequality, we obtain
\[
\|u - u_{ms}\|^2_A = (u - u_{ms})^T A (u - u_{ms}) = 
(u - u_{ms})^T A u = (u - u_{ms})^T f \leq 
\|f\|_{D^{-1}} \ \|u - u_{ms}\|_D. 
\]
Finally using the Lemma \ref{lem1}, we get
\[
\|u - u_{ms}\|^2_A \leq 
\|f\|_{D^{-1}} 
\|u - u_{ms}\|_D  \leq 
\|f\|_{D^{-1}} 
C H C_{{ratio}}^{1/2} \|u - u_{ms}\|_A.
\]
\end{proof}
\end{document}